\newtheorem{theorem}{Theorem}
\newtheorem{proposition}[theorem]{Proposition}
\newtheorem{lemma}[theorem]{Lemma}
\newtheorem{remark}{Remark}
\title{Existence and concentration of positive bound states for the Schrodinger-Poisson system with potential functions}
\date{}
\author{Patr\'{i}cia L. Cunha \thanks{Supported by CAPES-PROEX/Brazil.}\thanks{patcunha80@gmail.com} \\
\footnotesize{Departamento de Informática e Métodos Quantitativos, FGV-SP, Brazil}\\ }
\begin{document}
\maketitle

\begin{abstract}
In this article we study the existence and concentration behavior of bound states for a nonlinear Schrödinger-Poisson system with a parameter
$\varepsilon>0$. Under some suitable conditions on the potential functions, we prove that for $\varepsilon$ small the system has a positive
solution that concentrates at a point which is a global minimum of the minimax function associated to the related autonomous problem.
\end{abstract}

\text{\footnotesize{\textit{Keywords}: Schrödinger-Poisson system; variational methods, concentration.}}

\section{Introduction}

In this article we will focus on the following Schrödinger-Poisson system
\begin{equation*} \left \{ \begin{array}{ll}
 -\varepsilon^2\Delta v+ V(x)v+K(x)\phi(x) v= |v|^{q-2}v & \mbox{in}\quad \mathbb{R}^{3}\\
-\Delta \phi=K(x)v^{2} & \mbox{in}\quad \mathbb{R}^{3}
\end{array}\right.\tag{$\mathcal{SP}_{\varepsilon}$}
\end{equation*}
where $\varepsilon>0$ is a parameter, $q\in(4,6)$ and $V,K:\mathbb{R}^3\rightarrow\mathbb{R}$ are, respectively, an external potential and a charge
density. The unknowns of the system are the field $u$ associated with the particles and the electric potential $\phi$. We are interested in the existence
 and concentration behavior of solutions of $(\mathcal{SP}_\varepsilon)$ in the semiclassical limit $\varepsilon\rightarrow 0$.

The first equation of ($\mathcal{SP}_{\varepsilon}$) is a nonlinear equation in which the potential $\phi$ satisfies a nonlinear Poisson equation.
For this reason, ($\mathcal{SP}_{\varepsilon}$) is called a Schrödinger-Poisson system, also known as Schrödinger-Maxwell system. For more informations about
physical aspects, we refer \cite{Benci-Fortunato-1998,D'Aprile-Mugnai} and references therein.

We observe that when $\phi\equiv 0$, ($\mathcal{SP}_\varepsilon$) reduces to the well known Schrödinger equation
\begin{equation*} -\varepsilon^2\Delta u+ V(x)u = f(x,u) \quad x\in\mathbb{R}^N \tag{$\mathcal{S}$}.
\end{equation*}

In the last years, the nonlinear stationary Schrödinger equation has been widely investigated, mainly in the semiclassical limit as
$\varepsilon\rightarrow 0$ (see e.g. \cite{Rabinowitz,Wang,Wang-Zeng} and its references).

In \cite{Rabinowitz}, Rabinowitz studied problem ($\mathcal{S}$) through mountain pass arguments in order to find least energy solutions, for
$\varepsilon>0$  sufficiently small. Then, Wang \cite{Wang} proved that the solution in \cite{Rabinowitz}  concentrates around the global minimal of
$V$ when  $\varepsilon$ tends to 0.

In \cite{Wang-Zeng}, Wang and Zeng considered the following  Schrödinger equation
\begin{equation*}-\varepsilon^2\Delta u+V(x)u =K(x) |u|^{p-1}u+Q(x) |u|^{q-1}u, \quad x\in\mathbb{R}^N\tag{$\mathcal{WZ}$}
\end{equation*}
where $1<q<p<(n+2)/{(n-2)^+}$. They proved the existence of least energy solutions and their concentration around a point in the semiclassical limit.
The authors used the energy function $C(s)$ defined as the minimal energy of the functional associated with
$\Delta u+V(s)u =K(s) |u|^{p-1}u+Q(s) |u|^{q-1}u$,
where $s\in\mathbb{R}^N$ acts as a parameter instead of an independent variable. For each $\varepsilon>0$ sufficiently small, they proved the
existence of a solution $u_\varepsilon$ for ($\mathcal{WZ}$), whose global maximum  approaches to a point $y^*$  when $\varepsilon$ tends to 0.
Moreover, under suitable hypothesis on the potentials $V$ e $W$, the function $\xi\mapsto C(\xi)$ assumes a minimum at $y^*$.

Motivated by those results, Alves and Soares \cite{Alves-Soares} investigated the same phenomenon for the following class of gradient systems
\begin{equation*} \left \{ \begin{array}{ll}
 -\varepsilon^2\Delta u+ V(x)u= Q_u(u,v) & \mbox{in}\quad \mathbb{R}^{N}\\
 -\varepsilon^2\Delta v+ W(x)v= Q_v(u,v) & \mbox{in}\quad \mathbb{R}^{N}\\
u(x), v(x)\rightarrow 0, \,\,\, \mbox{ao}\,\, |x|\rightarrow\infty &\\
u,v>0 \quad \mathbb{R}^{N}
\end{array}\right.\tag{$\mathcal{AS}$}
\end{equation*}

In this system is natural to expect some competition between the potentials $V$ and $W$, each one trying to attract the local maximum
points of the solutions to its minimum points. In fact, in \cite{Alves-Soares} the authors proved that functions $u_\varepsilon$ and $v_\varepsilon$
satisfies ($\mathcal{AS}$) and concentrate around the same point which is the minimum of the respective function $C(s)$.

In \cite{Yang-Han}, Yang and Han studied the following Schrödinger-Poisson system
\begin{equation*} \left \{ \begin{array}{ll}
\Delta v+ V(x)v+K(x)\phi(x) v= |v|^{q-2}v & \mbox{in}\quad \mathbb{R}^{3}\\
-\Delta \phi=K(x)v^{2} & \mbox{in}\quad \mathbb{R}^{3}
\end{array}\right.\tag{$\mathcal{SP}$}.
\end{equation*}
Under suitable assumptions on $V$, $K$ and $f$ they proved existence and multiplicity results by using the mountain pass theorem and the fountain theorem.
Later, L. Zhao, Liu and F. Zhao \cite{Zhao-Liu-Zhao}, using variational methods, proved the existence and concentration of solutions for system
\begin{equation*} \left \{ \begin{array}{ll}
\Delta v+ \lambda V(x)v+K(x)\phi(x) v= |v|^{q-2}v & \mbox{in}\quad \mathbb{R}^{3}\\
-\Delta \phi=K(x)v^{2} & \mbox{in}\quad \mathbb{R}^{3}
\end{array}\right.
\end{equation*}
when $\lambda>0$ is a parameter and $2<p<6$.

Several papers dealt with system $(\mathcal{SP})$ under variety assumptions on potentials $V$ and $K$. Most part of the literature focuses on the study
of the system with $V$ or $K$ constant or radially symmetric, mainly studying existence, nonexistence and multiplicity of solutions see e.g.
\cite{Ambrosetti,Coclite,D'Aprile-Mugnai,Fang-Zhang,Kikuchi,Mercuri,Ruiz}.

The double parameters'perturbation was also considered in system $(\mathcal{SP}_\varepsilon)$. In \cite{He-Zou2012}, He and Zhou studied the existence and
 behavior of a ground state solution which concentrates around the global minimum of the potential V. They considered $K\equiv 1$ and the presence of the
 nonlinear term $f(x,u)$.

Recently, Ianni and Vaira \cite{Ianni-Vaira} studied the Schrödinger-Poisson system $(\mathcal{SP}_\varepsilon)$ proving that if $V$ has a
non-degenerated critical
point $x_0$, then there exists a solution that concentrates around this point. Moreover, they also proved that if $x_0$
is degenerated for $V$ and a local minimum for $K$, then there exist a solution concentrating around $x_0$. The proof was based in the
Lyapunov-Schmidt reduction.

Using variational methods as employed by \cite{Alves-Soares,Rabinowitz,Wang-Zeng}, we prove that there exists a solution $u_\varepsilon$ for the Schrödinger-Poisson
system ($\mathcal{SP}_\varepsilon $) which concentrates around a point, without any additional assumption on the degenerability of such point related with the
potentials $V$ and $K$, as used in \cite{Ianni-Vaira}.

More precisely, denote $C_\infty$ as the minimax value related to
\begin{eqnarray*} \left \{ \begin{array}{ll}
 -\Delta v+ V_\infty v+K_\infty\phi v= |v|^{q-2}v & \mbox{in}\quad \mathbb{R}^{3}\\
-\Delta \phi=K_\infty v^{2} & \mbox{in}\quad \mathbb{R}^{3}
\end{array}\right.
\end{eqnarray*}
where the following conditions hold
\begin{itemize}
\item [$(H_0)$] There exists $\alpha>0$ such that $V(x), K(x)\geq \alpha>0$, \,\, $\forall\,x\in\mathbb{R}^3$
\item [$(H_1)$] $V_\infty$ and $K_\infty$ are defined by \begin{equation*}  \begin{array}{ll}
V_\infty=\liminf _{|x|\rightarrow\infty}V(x)>\inf_{x\in\mathbb{R}^3}V(x)\\
K_\infty=\liminf _{|x|\rightarrow\infty}K(x)>\inf_{x\in\mathbb{R}^3}K(x).
\end{array}\end{equation*}
\end{itemize}

We prove that if
\begin{eqnarray*}C_\infty>\inf_{\xi\in\mathbb{R}^3} C(\xi)
\end{eqnarray*}
then, system ($\mathcal{SP}_{\varepsilon}$) has a positive solution $v_\varepsilon$ as $\varepsilon$ tends to zero. After passing to a subsequence,
$v_\varepsilon$ concentrates at a global minimum point of $C(\xi)$ for $\xi\in\mathbb{R}^3$, where the energy function $C(\xi)$ is defined to be the
minimax function associated with the problem
\begin{equation*} \left \{ \begin{array}{ll}
 -\Delta u+ V(\xi)u+K(\xi)\phi(\xi) u= |u|^{q-2}u & \mbox{in}\quad \mathbb{R}^{3}\\
-\Delta \phi=K(\xi)u^{2} & \mbox{in}\quad \mathbb{R}^{3}
\end{array}\right.\tag{$\mathcal{SP}_{\xi}$}
\end{equation*}
Therefore, $C(\xi)$ plays a central role in our study.

The main result for system ($\mathcal{SP}_{\varepsilon}$) is the following

\begin{theorem}\label{principal} Suppose $(H_0)-(H_1)$ hold. If
\begin{equation}\tag{$C^\infty$} C_\infty>\inf_{\xi\in\mathbb{R}^3} C(\xi),
\end{equation}
then there exists $\varepsilon^*>0$ such that system ($\mathcal{SP}_{\varepsilon}$) has a positive solution $v_\varepsilon$ for
$\varepsilon\in(0,\varepsilon^*)$.
Moreover, $v_{\varepsilon}$ concentrates at a local (hence global) maximum point $y^*\in\mathbb{R}^3$
such that
$$C(y^*)=\min_{\xi\in\mathbb{R}^3}C(\xi).
$$
\end{theorem}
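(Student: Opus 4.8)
The plan is to adapt the variational scheme of \cite{Rabinowitz,Wang-Zeng,Alves-Soares} to the nonlocal setting. First I would reduce the system to a single functional: solving the Poisson equation by the Newtonian potential produces a unique $\phi=\phi_v\ge 0$ that depends smoothly on $v$, and substituting it into the first equation gives the $C^1$ functional
\[
I_\varepsilon(v)=\frac12\int_{\mathbb{R}^3}\bigl(\varepsilon^2|\nabla v|^2+V(x)v^2\bigr)+\frac14\int_{\mathbb{R}^3}K(x)\phi_v v^2-\frac1q\int_{\mathbb{R}^3}|v|^q
\]
on the Hilbert space with norm $\|v\|_\varepsilon^2=\int_{\mathbb{R}^3}(\varepsilon^2|\nabla v|^2+V(x)v^2)$, which by $(H_0)$ is equivalent to the scaled $H^1$ norm; its nontrivial critical points (together with $\phi_v$) are the solutions of $(\mathcal{SP}_\varepsilon)$. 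Since $q>4$, on each ray $t\mapsto I_\varepsilon(tv)$ the quartic nonlocal term is dominated by the $L^q$ term, so $I_\varepsilon$ has the mountain-pass geometry and its mountain-pass value $c_\varepsilon$ equals the least energy over the Nehari manifold. The same construction with frozen coefficients and $\varepsilon=1$ defines $C(\xi)$ and $C_\infty$; standard arguments show $\xi\mapsto C(\xi)$ is continuous with $\liminf_{|\xi|\to\infty}C(\xi)\ge C_\infty$ by $(H_1)$, so hypothesis $(C^\infty)$ guarantees that $c_0:=\inf_{\xi}C(\xi)$ is attained at some point of $\mathbb{R}^3$.

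Next I would pin down the asymptotics of $c_\varepsilon$. For the upper bound, test $I_\varepsilon$ with a cut-off, $\varepsilon$-rescaled ground state of $(\mathcal{SP}_\xi)$ at a minimum point $\xi$ of $C$, obtaining (after the natural energy normalization) $\limsup_{\varepsilon\to0}c_\varepsilon\le c_0$. For the lower bound I would perform the change of variables $y=x/\varepsilon$ and prove a global compactness (splitting) lemma in the spirit of the Benci--Cerami and Lions analysis: any Palais--Smale sequence for the rescaled functional decomposes, up to a subsequence, into a weak limit solving a frozen-coefficient problem plus finitely many rescaled bubbles, each carrying energy at least $C_\infty$ if it escapes to infinity and at least $C(\xi)\ge c_0$ if it concentrates at a finite $\xi$, with the energies adding in the limit. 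The delicate point is that the Coulomb energy $\int K\phi_v v^2$ and its differential must split additively along this decomposition without residual interaction terms; this is precisely where $q\in(4,6)$ enters, since it renders the $4$-homogeneous nonlocal term subcritical relative to $|v|^{q-2}v$. Combining the two bounds gives $c_\varepsilon\to c_0$.

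With the splitting lemma available, I would note that $c_0<C_\infty$ prevents a Palais--Smale sequence at the level $c_\varepsilon$ from losing mass to infinity for $\varepsilon$ small, so $I_\varepsilon$ satisfies $(PS)_{c_\varepsilon}$; the mountain-pass theorem then yields a nontrivial critical point $v_\varepsilon$, i.e. a solution of $(\mathcal{SP}_\varepsilon)$, and since $I_\varepsilon(v)=I_\varepsilon(|v|)$ and $\phi_{v}\ge0$, the strong maximum principle makes it positive. For the concentration, set $w_\varepsilon(y)=v_\varepsilon(\varepsilon y+x_\varepsilon)$ with $x_\varepsilon$ a maximum point of $v_\varepsilon$; applying the splitting lemma at the precise level $c_0$ forces $w_\varepsilon$ to converge strongly in $H^1(\mathbb{R}^3)$, along a subsequence, to a positive ground state of $(\mathcal{SP}_\xi)$ with $\xi=\lim x_\varepsilon$. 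The centers $x_\varepsilon$ must stay bounded, for otherwise the limit would be the problem at infinity, of energy $\ge C_\infty>c_0$; and the energy identity $C(\xi)=c_0$ shows that $y^*:=\lim x_\varepsilon$ is a global minimum of $C$.

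Finally, uniform elliptic estimates for $w_\varepsilon$ and a comparison-function argument give uniform exponential decay of $w_\varepsilon$, hence $v_\varepsilon(x)\le Ce^{-c|x-x_\varepsilon|/\varepsilon}$ away from $x_\varepsilon$; consequently $v_\varepsilon\to0$ uniformly outside any neighborhood of $y^*$, every maximum point of $v_\varepsilon$ lies within $O(\varepsilon)$ of $x_\varepsilon$, and $v_\varepsilon$ concentrates at $y^*$ with $C(y^*)=\min_\xi C(\xi)$. I expect the main obstacle to be the splitting lemma with the nonlocal term, namely proving that the Coulomb energy and its derivative distribute over the bubbles without surviving cross terms and that, at the level $c_0$, exactly one bubble survives (no vanishing, no splitting); the cut-off and penalization bookkeeping forced by the gap between $V_\infty,K_\infty$ and $\inf V,\inf K$ is more tedious than deep once this compactness is in hand.
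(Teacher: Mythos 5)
Your plan arrives at the same conclusion by the same overall strategy (reduce to a single functional, compare mountain-pass levels $c_\varepsilon$ with $C(\xi)$ and $C_\infty$, rescale around concentration centers), but the key compactness step is handled by a genuinely different and heavier device than the paper uses. You propose a full Benci--Cerami/Lions global compactness (splitting) lemma, with the Coulomb energy distributing over bubbles; the paper never proves such a decomposition. Instead it argues by contradiction at the first nontrivial level only: if the weak limit of the Palais--Smale sequence vanishes, a Lions-type non-vanishing alternative plus a direct comparison of $I_\varepsilon$ with the constant-coefficient functional $I_{\mu,\nu}$ (where $V>\mu$, $K>\nu$ outside a large ball, $\mu<V_\infty$, $\nu<K_\infty$) yields $c\ge c_{\mu,\nu}$, and continuity of $(\mu,\nu)\mapsto c_{\mu,\nu}$ gives $c\ge C_\infty$, contradicting $c<C_\infty$. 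This buys two things you should be aware of. First, it avoids exactly the obstacle you flag as the hardest point, namely the additive splitting of the nonlocal term over bubbles. Second, and more importantly, under $(H_1)$ the potentials only have $\liminf$'s at infinity, so ``the problem at infinity'' is not a well-defined autonomous equation; a splitting lemma whose escaping profiles solve a limit equation with coefficients $V_\infty,K_\infty$ implicitly assumes $V(x)\to V_\infty$ and $K(x)\to K_\infty$, which is stronger than what is hypothesized. Your argument can be repaired by replacing the limit problem with the family $I_{\mu,\nu}$ and passing $\mu\to V_\infty$, $\nu\to K_\infty$ at the end --- which is precisely the paper's comparison trick, at which point the full splitting lemma becomes unnecessary. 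Two smaller remarks: your claim $\liminf_{|\xi|\to\infty}C(\xi)\ge C_\infty$ (used to get attainment of $\inf_\xi C$) needs the same $(\mu,\nu)$-monotonicity-plus-continuity argument; and for the ``unique local (hence global) maximum'' part of the statement you only locate maximum points within $O(\varepsilon)$ of the center, whereas the paper pins down uniqueness by passing to the $C^2_{loc}$ limit profile and invoking Gidas--Ni--Nirenberg (radial symmetry and a non-degenerate unique maximum of the limit), a step your exponential-decay argument does not replace.
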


\begin{remark}
Theorem \ref{principal} complements the study made in \cite{Fang-Zhang,Ianni-Vaira,Yang-Han,Zhao-Liu-Zhao} in the following sense: we deal with
the perturbation problem ($\mathcal{SP}_\varepsilon$) and study the concentration behavior of positive bound states.
\end{remark}

\begin{remark}
To the best of our knowledge, it seems that the only previous paper regarding the concentration of solutions for the perturbed Schrödinger-Poisson
system with potentials $V$ and $K$ is \cite{Ianni-Vaira}, where the smoothness  of such potentials is considered. We only need the boundedness
of $V$ and $K$. Moreover, we do not assume that the concentration point of solutions $v_\varepsilon$ for the system ($\mathcal{SP}_\varepsilon$) is a local minimum (or maximum) of such potentials, as in the previous paper. In our research we shall consider a different variational approach.
\end{remark}

The outline of this paper is as follows: in Section 2 we set the variational framework. In Section 3 we study the autonomous system related
to ($\mathcal{SP}_\varepsilon$). In section 4 we establish an existence result for system ($\mathcal{SP}_\varepsilon$) with $\varepsilon=1$.
In section 5, we prove Theorem \ref{principal}.
\section{Variational framework and preliminary results}

Throughout this paper we use the following notations:
\begin{itemize}
\item $H^1(\mathbb{R}^3)$ is the usual Sobolev space endowed with the standard scalar product and norm
\begin{eqnarray*} (u,v)=\int_{\mathbb{R}^3}(\nabla u\nabla v+uv)\,dx, \quad \|u\|^2=\int_{\mathbb{R}^3}(|\nabla u|^2+u^2)\,dx.
\end{eqnarray*}
\item $\mathcal{D}^{1,2}=\mathcal{D}^{1,2}(\mathbb{R}^3)$ represents the completion of $C_0^\infty(\mathbb{R}^3)$ with respect to the norm
\begin{eqnarray*}\|u\|_{\mathcal{D}^{1,2}}^2=\int_{\mathbb{R}^3}|\nabla u|^2\,dx.
\end{eqnarray*}
\item $L^p(\Omega)$, $1\leq p\leq \infty$, $\Omega\subset\mathbb{R}^3$, denotes a Lebesgue space; the norm in $L^p(\Omega)$ is denoted by
$\|u\|_{L^p(\Omega)}$, where $\Omega $ is a proper subset of $\mathbb{R}^3$; $\|u\|_p$ is the norm in $L^p(\mathbb{R}^3)$.
\end{itemize}

We recall that by the Lax-Milgram theorem, for every $v\in H^1(\mathbb{R}^3)$, the Poisson equation  $-\Delta \phi=v^{2}$  has a unique positive solution
$\phi=\phi_{v}\in \mathcal{D}^{1,2}(\mathbb{R}^3)$ given by
\begin{eqnarray}\label{1} \phi_{v}(x)=\int_{\mathbb{R}^3}\frac{v^{2}(y)}{|x-y|}\,dy.
\end{eqnarray}

The function $\phi: H^{1}(\mathbb{R}^3)\rightarrow \mathcal{D}^{1,2}(\mathbb{R}^3)$, $\phi[v]=\phi_v$ has the following properties
(see for instance Cerami and Vaira \cite{Cerami-Vaira})
\begin{lemma} \label{propriedade phi}For any $v\in H^{1}(\mathbb{R}^3)$, we have
\begin{itemize}\item [i)] $\phi$ is continuous and maps bounded sets into bounded sets;
\item [ii)] $\phi_v\geq 0$;
\item [iii)] there exists $C>0$ such that $\|\phi\|_{D^{1,2}}\leq C\|v\|^2$ and
$$\int_{\mathbb{R}^3}|\nabla v |^2\, dx=\int_{\mathbb{R}^3}\phi_{v} v^2\, dx\leq C\|v\|^4;$$
\item [iv)] $\phi_{tv}=t^2\phi_{v}$, $\forall\,t>0$;
\item [v)] if $v_n\rightharpoonup v$ in $H^1(\mathbb{R}^3)$, then $\phi_{v_n}\rightharpoonup\phi_v$ in $\mathcal{D}^{1,2}(\mathbb{R}^3)$.
\end{itemize}
\end{lemma}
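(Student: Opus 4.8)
The plan is to treat $\phi_v$ throughout via its weak characterization: $\phi_v\in\mathcal{D}^{1,2}(\mathbb{R}^3)$ is the unique function with
$$\int_{\mathbb{R}^3}\nabla\phi_v\cdot\nabla\psi\,dx=\int_{\mathbb{R}^3}v^2\psi\,dx\quad\text{for all }\psi\in\mathcal{D}^{1,2}(\mathbb{R}^3),$$
which is exactly what the Lax--Milgram theorem produces once the functional $\psi\mapsto\int v^2\psi$ is seen to be bounded on $\mathcal{D}^{1,2}$. First I would record the single analytic estimate that drives everything: by the Sobolev embedding $\mathcal{D}^{1,2}(\mathbb{R}^3)\hookrightarrow L^6(\mathbb{R}^3)$ with constant $S$ and H\"older with exponents $6/5$ and $6$,
$$\Big|\int_{\mathbb{R}^3}v^2\psi\,dx\Big|\le\|v^2\|_{6/5}\,\|\psi\|_6\le S\,\|v\|_{12/5}^2\,\|\psi\|_{\mathcal{D}^{1,2}},$$
and since $12/5\in[2,6]$ we have $\|v\|_{12/5}\le c\|v\|$ by the $H^1$ embeddings. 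This gives boundedness of the functional (hence existence and uniqueness of $\phi_v$) and, testing with $\psi=\phi_v$, the chain $\|\phi_v\|_{\mathcal{D}^{1,2}}^2=\int v^2\phi_v\le S\|v\|_{12/5}^2\|\phi_v\|_{\mathcal{D}^{1,2}}$, whence $\|\phi_v\|_{\mathcal{D}^{1,2}}\le C\|v\|^2$ and then $\int\phi_v v^2=\|\phi_v\|_{\mathcal{D}^{1,2}}^2\le C\|v\|^4$. This is precisely item (iii) (with the natural reading $\int|\nabla\phi_v|^2$ on the left-hand side).

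For (ii) and (iv) I would argue quickly. Positivity follows either from the representation $(1)$, whose kernel $|x-y|^{-1}$ is nonnegative, or, staying within the weak formulation, by testing with $\psi=\phi_v^-=\min(\phi_v,0)$: one gets $\|\phi_v^-\|_{\mathcal{D}^{1,2}}^2=\int v^2\phi_v^-\le0$, forcing $\phi_v^-\equiv0$. Homogeneity (iv) is immediate from $(1)$ since $(tv)^2=t^2v^2$, or from uniqueness applied to $-\Delta(t^2\phi_v)=(tv)^2$.

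For (i), ``bounded sets to bounded sets'' is nothing but the estimate $\|\phi_v\|_{\mathcal{D}^{1,2}}\le C\|v\|^2$ from (iii). For continuity, if $v_n\to v$ in $H^1$, then $\phi_{v_n}-\phi_v$ solves $-\Delta(\phi_{v_n}-\phi_v)=v_n^2-v^2$ weakly, so testing with $\phi_{v_n}-\phi_v$ and reusing the H\"older--Sobolev bound yields
$$\|\phi_{v_n}-\phi_v\|_{\mathcal{D}^{1,2}}\le S\,\|v_n^2-v^2\|_{6/5}\le S\,\|v_n-v\|_{12/5}\,\|v_n+v\|_{12/5}\to0,$$
where the last step uses $v_n^2-v^2=(v_n-v)(v_n+v)$, H\"older, and $v_n\to v$ in $L^{12/5}$.

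The delicate point is (v), the weak continuity, and this is where I expect the main work to lie. Given $v_n\rightharpoonup v$ in $H^1$, the sequence is bounded, so by (iii) $\{\phi_{v_n}\}$ is bounded in $\mathcal{D}^{1,2}$ and some subsequence satisfies $\phi_{v_n}\rightharpoonup\phi$. To identify $\phi=\phi_v$ I would pass to the limit in $\int\nabla\phi_{v_n}\cdot\nabla\psi=\int v_n^2\psi$ against a fixed $\psi\in C_0^\infty(\mathbb{R}^3)$: the left side tends to $\int\nabla\phi\cdot\nabla\psi$ by weak convergence, while for the right side the compact support of $\psi$ lets me invoke the Rellich--Kondrachov theorem on a ball containing $\operatorname{supp}\psi$, giving $v_n\to v$ strongly in $L^2_{loc}$ and hence $\int v_n^2\psi\to\int v^2\psi$. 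Thus $\int\nabla\phi\cdot\nabla\psi=\int v^2\psi$ for all $\psi\in C_0^\infty$, and by density and uniqueness $\phi=\phi_v$; since every weakly convergent subsequence shares this limit, the full sequence converges, $\phi_{v_n}\rightharpoonup\phi_v$. The only genuine subtlety is the nonlinear term: weak $H^1$-convergence by itself does not pass through the map $v\mapsto v^2$, and it is precisely the local compactness furnished by the compact support of the test function that rescues the argument.
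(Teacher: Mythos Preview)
Your argument is correct in every part. The key analytic inequality you record (H\"older with exponents $6/5$ and $6$ together with the Sobolev embedding $\mathcal{D}^{1,2}\hookrightarrow L^6$, and then $H^1\hookrightarrow L^{12/5}$) is exactly the standard route to (iii), and the remaining items follow from it as you indicate. Your treatment of (v) via Rellich--Kondrachov on the support of a test function, followed by the subsequence/uniqueness-of-limit argument, is the clean way to handle the nonlinear map $v\mapsto v^2$ under weak convergence. Your parenthetical reading of the left-hand side of (iii) as $\int_{\mathbb{R}^3}|\nabla\phi_v|^2\,dx$ rather than $\int_{\mathbb{R}^3}|\nabla v|^2\,dx$ is also the only sensible interpretation; as written in the paper it is a typographical slip.

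As for comparison: the paper does not give its own proof of this lemma. It simply states the properties and refers the reader to Cerami and Vaira \cite{Cerami-Vaira}. Your proposal therefore supplies a complete, self-contained argument where the paper offers only a citation; the content you present is precisely what one finds (in one guise or another) in the cited reference and in the broader Schr\"odinger--Poisson literature.
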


As in \cite{Ambrosetti}, for every $v\in H^1(\mathbb{R}^3)$, there exist a unique solution $\phi=\phi_{K,v}\in \mathcal{D}^{1,2}(\mathbb{R}^3)$ of
$-\Delta \phi=K(x)v^{2}$ where
\begin{eqnarray}\label{1} \phi_{K,v}(x)=\int_{\mathbb{R}^3}\frac{K(y)v^{2}(y)}{|x-y|}dy.
\end{eqnarray}
and it is easy to see that $\phi_{K,v}$ satisfies Lemma \ref{propriedade phi} if $K$ satisfies conditions $(H_0)-(H_1)$.

Substituting (\ref{1}) into the first equation of ($\mathcal{SP}_\varepsilon$), we obtain
\begin{eqnarray}\label{2}
-\varepsilon^2\Delta v+ V(x)v+K(x)\phi_{K,v}(x) v= |v|^{q-2}v.
\end{eqnarray}

Making the changing of variables $x\mapsto \varepsilon x$ and setting $u(x)=v(\varepsilon x)$, (\ref{2}) becomes
\begin{eqnarray}\label{3}
-\Delta u+ V(\varepsilon x)u+K(\varepsilon x)\phi_{K,v}(\varepsilon x) u= |u|^{q-2}u.
\end{eqnarray}

A simple computation shows that
$$\phi_{K,v}(\varepsilon x)=\varepsilon^2 \phi_{\varepsilon,u}(x),
$$
where
$$\phi_{\varepsilon,u}(x)=\int_{\mathbb{R}^3}\frac{K(\varepsilon y)u^2(y)}{|x-y|}dy.
$$

Substituting it into (\ref{3}), ($\mathcal{SP}_\varepsilon$) can be rewritten in the following equivalent equation
\begin{equation}\label{4}\tag{$\mathcal{S}_\varepsilon$}
-\Delta u+ V(\varepsilon x)u+\varepsilon^2 K(\varepsilon x)\phi_{\varepsilon,u} u= |u|^{q-2}u.
\end{equation}

Note that if $u_\varepsilon$ is a solution of (\ref{4}), then $v_\varepsilon(x)=u_\varepsilon(\frac{x}{\varepsilon})$ is a solution of (\ref{2}).

We denote by $H_\varepsilon=\{u\in H^{1}(\mathbb{R}^3):\, \int_{\mathbb{R}^3}V(\varepsilon x)u^2<\infty\}$ the Sobolev space endowed with the norm
\begin{eqnarray*} \|u\|_{\varepsilon}^{2}=\int_{\mathbb{R}^{3}}(|\nabla u|^2+V(\varepsilon x)u^2)\,dx.
\end{eqnarray*}

At this step, we see that ($\mathcal{S}_\varepsilon$) is variational and its solutions are critical points of the functional
\begin{eqnarray*} I_\varepsilon(u)=\frac{1}{2}\int_{\mathbb{R}^3}(|\nabla u|^2+V(\varepsilon x)u^2)\,dx+
\frac{\varepsilon^2}{4}\int_{\mathbb{R}^3}K(\varepsilon x)\phi_{\varepsilon,u}(x) u^2\,dx-\frac{1}{q}\int_{\mathbb{R}^3}|u|^q\,dx.
\end{eqnarray*}
\section{Autonomous Case}

In this section we study the following autonomous system
\begin{equation*} \left \{ \begin{array}{ll}
 -\Delta u+ V(\xi)u+K(\xi)\phi(\xi) u= |u|^{q-2}u & \mbox{in}\quad \mathbb{R}^{3}\\
-\Delta \phi=K(\xi)u^{2} & \mbox{in}\quad \mathbb{R}^{3}
\end{array}\right.\tag{$\mathcal{SP}_{\xi}$}
\end{equation*}
where $\xi\in\mathbb{R}^3$.

We associate with system ($\mathcal{SP}_{\xi}$) the functional $I_\xi: H_\xi\mapsto\mathbb{R}$
\begin{eqnarray}\label{Ixi}I_\xi(u)=\frac{1}{2}\int_{\mathbb{R}^3}(|\nabla u|^2+V(\xi) u^2)\,dx+\frac{1}{4}\int_{\mathbb{R}^3}K(\xi)\phi_u(\xi)
u^2\,dx-\frac{1}{q}\int_{\mathbb{R}^3}|u|^q\,dx.
\end{eqnarray}

Hereafter, the Sobolev space $H_\xi=H^1(\mathbb{R}^3)$ is endowed with the norm
\begin{eqnarray*}\|u\|_\xi=\int_{\mathbb{R}^3}(|\nabla u|^2 +V(\xi) u^2)\,dx.
\end{eqnarray*}

By standard arguments, the functional $I_\xi$ verifies the Mountain-Pass Geometry, more exactly it satisfies the following lemma
\begin{lemma}\label{mountain pass geometry} The functional $I_\xi$ satisfies
\begin{itemize}
 \item [(i)] There exist positive constants $\alpha,\rho$ such that $I_\xi(u)\geq\alpha$ for  $\|u\|_\xi=\rho$.
 \item [(ii)] There exists $u_{1}\in H^1(\mathbb{R}^3)$ with $\|u_{1}\|_\xi>\rho$ such that $I_\xi(u_{1})< 0$.
\end{itemize}
\end{lemma}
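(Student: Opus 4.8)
The plan is to verify the two Mountain-Pass conditions directly from the explicit form \eqref{Ixi} of $I_\xi$, using the Sobolev embedding $H^1(\mathbb{R}^3)\hookrightarrow L^q(\mathbb{R}^3)$ for $q\in(4,6)$ together with the nonnegativity and homogeneity properties of $\phi_u(\xi)$ recorded in Lemma \ref{propriedade phi}. Throughout I would use that, by $(H_0)$, $V(\xi)\ge\alpha>0$, so $\|u\|_\xi^2=\int_{\mathbb{R}^3}(|\nabla u|^2+V(\xi)u^2)\,dx$ is a norm equivalent to the standard $H^1$ norm (with constants depending on $\xi$ only through $V(\xi)$, $K(\xi)$), hence the Sobolev inequality gives a constant $C_q>0$ with $\|u\|_q^q\le C_q\|u\|_\xi^q$.

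For part (i), I would estimate from below: since $K(\xi)\ge 0$ and $\phi_u(\xi)\ge 0$ by Lemma \ref{propriedade phi}(ii), the quartic term $\frac{1}{4}\int_{\mathbb{R}^3}K(\xi)\phi_u(\xi)u^2\,dx$ is nonnegative and can simply be dropped. This yields
\begin{equation*}
I_\xi(u)\ \ge\ \frac{1}{2}\|u\|_\xi^2-\frac{1}{q}\|u\|_q^q\ \ge\ \frac{1}{2}\|u\|_\xi^2-\frac{C_q}{q}\|u\|_\xi^q\ =\ \|u\|_\xi^2\Big(\frac{1}{2}-\frac{C_q}{q}\|u\|_\xi^{q-2}\Big).
\end{equation*}
Since $q-2>0$, choosing $\rho>0$ small enough that $\frac{C_q}{q}\rho^{q-2}\le\frac14$ gives $I_\xi(u)\ge\frac14\rho^2=:\alpha>0$ whenever $\|u\|_\xi=\rho$.

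For part (ii), I would fix any $w\in H^1(\mathbb{R}^3)\setminus\{0\}$ with $w\ge 0$ (say $w\in C_0^\infty$) and examine the curve $t\mapsto I_\xi(tw)$ for $t>0$. Using Lemma \ref{propriedade phi}(iv), $\phi_{tw}(\xi)=t^2\phi_w(\xi)$, so
\begin{equation*}
I_\xi(tw)=\frac{t^2}{2}\|w\|_\xi^2+\frac{t^4}{4}\int_{\mathbb{R}^3}K(\xi)\phi_w(\xi)w^2\,dx-\frac{t^q}{q}\|w\|_q^q.
\end{equation*}
Because $q>4$, the last term dominates as $t\to\infty$, so $I_\xi(tw)\to-\infty$; picking $t_0$ large enough that $I_\xi(t_0w)<0$ and $\|t_0w\|_\xi>\rho$, we set $u_1=t_0w$. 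The only mildly delicate point is that the quartic Coulomb term raises the homogeneity degree from the usual cubic Schr\"odinger case, but since $q\in(4,6)$ was assumed precisely so that $q>4$, the superlinearity of $|u|^{q-2}u$ still wins over the quartic term; there is no real obstacle here, and both verifications are routine estimates.
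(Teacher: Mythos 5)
Your proof is correct and is exactly the standard verification the paper has in mind (the paper omits the proof entirely, saying only ``by standard arguments''): drop the nonnegative Coulomb term and use the Sobolev embedding for (i), and use the homogeneities $t^2$, $t^4$, $t^q$ with $q>4$ for (ii). Nothing further is needed.
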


Applying a variant of the Mountain Pass Theorem (see \cite{Willem}), we obtain a sequence $(u_n)\subset H^1(\mathbb{R}^3)$ such that
\begin{eqnarray*}I_\xi(u_{n})\rightarrow C(\xi) \hspace{0.2cm}\text{and}\hspace{0.2cm} I_\xi'(u_{n})\rightarrow 0,
\end{eqnarray*}
where
\begin{eqnarray}\label{valorC}C(\xi)=\inf_{\gamma\in\Gamma}\max_{0\leq t\leq 1}I_\xi(\gamma(t)), \hspace{0.2cm} C(\xi)\geq\alpha
\end{eqnarray}
\noindent and
\begin{eqnarray*}\Gamma=\{\gamma\in\mathcal{C}([0,1],H^1(\mathbb{R}^3) )|\gamma(0)=0, \gamma(1)=u_{1} \}.
\end{eqnarray*}

We observe that $C(\xi)$ can be also characterized as
\begin{eqnarray*}C(\xi)=\inf_{u\neq 0}\max_{t>0}I_\xi(tu).
\end{eqnarray*}

\begin{proposition}\label{prop1} Let $\xi\in\mathbb{R}^3$. Then system ($\mathcal{SP}_{\xi}$) has a positive solution $u\in H^1(\mathbb{R}^3)$ such that
$I'_\xi(u)=0$  and $I_\xi(u)=C(\xi)$, for any $q\in(4,6)$.
\end{proposition}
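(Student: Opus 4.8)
The plan is to obtain the solution as a mountain-pass critical point and then upgrade a Palais–Smale sequence to an actual critical point by exploiting the translation invariance of the autonomous problem. First I would invoke Lemma~\ref{mountain pass geometry}, which gives the mountain-pass geometry for $I_\xi$, and the variant of the Mountain Pass Theorem to produce a sequence $(u_n)\subset H^1(\mathbb{R}^3)$ with $I_\xi(u_n)\to C(\xi)$ and $I_\xi'(u_n)\to 0$. The first routine step is to show $(u_n)$ is bounded in $H_\xi$: using $q>4$ one estimates $I_\xi(u_n)-\tfrac1q I_\xi'(u_n)[u_n]\ge (\tfrac12-\tfrac1q)\|u_n\|_\xi^2+(\tfrac14-\tfrac1q)\int K(\xi)\phi_u(\xi)u_n^2$, and since $q\in(4,6)$ the coefficient $\tfrac14-\tfrac1q>0$, so both terms on the right are nonnegative (using Lemma~\ref{propriedade phi}(ii)) and the left side is $C(\xi)+o(1)+o(\|u_n\|_\xi)$, whence boundedness follows.

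Next I would pass to a subsequence $u_n\rightharpoonup u$ in $H^1(\mathbb{R}^3)$. The key difficulty is the possible loss of compactness because $\mathbb{R}^3$ is unbounded, so a vanishing/nonvanishing dichotomy argument (à la Lions) is needed. If $u\neq 0$, then using Lemma~\ref{propriedade phi}(v) (weak continuity of $v\mapsto\phi_v$) together with the weak continuity of the nonlinear and nonlocal terms, one checks that $u$ is a nontrivial critical point of $I_\xi$; a Brezis–Lieb type splitting then shows $I_\xi(u)\le C(\xi)$, while the mountain-pass characterization $C(\xi)=\inf_{v\neq0}\max_{t>0}I_\xi(tv)$ gives the reverse inequality, so $I_\xi(u)=C(\xi)$. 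If instead $u=0$, the vanishing alternative would force $\|u_n\|_{L^q}\to 0$, hence $\|u_n\|_\xi\to 0$ and $C(\xi)=0$, contradicting $C(\xi)\ge\alpha>0$ from \eqref{valorC}; so nonvanishing holds, and after a suitable translation $u_n(\cdot+y_n)$ — legitimate since $V(\xi),K(\xi)$ are constants, so $I_\xi$ is translation invariant — the weak limit is nonzero and the previous case applies.

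It remains to show the critical point can be taken positive. Since $I_\xi(|u|)\le I_\xi(u)$ (the gradient term is unchanged, the other terms depend only on $|u|$), and since $|u|$ lies on the Nehari manifold up to scaling, one may replace $u$ by $|u|\ge 0$ without increasing the energy; as it still achieves $C(\xi)$ it is again a critical point, i.e.\ a nonnegative weak solution of $-\Delta u+V(\xi)u+K(\xi)\phi_u(\xi)u=|u|^{q-2}u$. Standard elliptic regularity ($u\in C^2$) together with the strong maximum principle (the zeroth-order coefficient $V(\xi)+K(\xi)\phi_u(\xi)$ being nonnegative and locally bounded by Lemma~\ref{propriedade phi}) then yields $u>0$ everywhere. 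I expect the main obstacle to be the compactness step: ruling out vanishing and handling the translations carefully so that the nonlocal term $\int K(\xi)\phi_{u_n}(\xi)u_n^2$ behaves well under the weak limit; once that is in place the rest is routine.
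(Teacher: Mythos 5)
Your proposal is correct and follows essentially the argument that the paper delegates to Azzollini--Pomponio (Theorem 1.1 there): mountain-pass geometry, boundedness of the Palais--Smale sequence using $I_\xi-\tfrac1q I_\xi'(\cdot)[\cdot]$ with $q>4$, recovery of compactness via the Lions vanishing/nonvanishing dichotomy and the translation invariance of the autonomous functional, identification of the level through the Nehari characterization, and positivity via $|u|$ plus the strong maximum principle. The only point worth tightening is the positivity step: $|u|$ lies \emph{exactly} on the Nehari manifold when $u$ does (no scaling needed), and the inference ``$|u|$ achieves $C(\xi)$ hence is a critical point'' relies on the Nehari manifold being a natural constraint, which does hold here because $q\in(4,6)$ makes $u\mapsto I_\xi'(u)u$ have strictly negative derivative along the manifold; alternatively one runs the whole argument with the truncated nonlinearity $(u^+)^{q-1}$.
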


\begin{proof}
The proof is an easy adaptation of Theorem 1.1 in \cite{Azzollini-Pomponio-SM} and we omit it.
\end{proof}

\begin{lemma} The function $\xi\mapsto C(\xi)$ is continuous.
\end{lemma}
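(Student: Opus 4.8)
The plan is to show that $\xi \mapsto C(\xi)$ is continuous by exploiting the min-max characterization $C(\xi) = \inf_{u \neq 0} \max_{t > 0} I_\xi(tu)$ together with the fact that $I_\xi$ depends on $\xi$ only through the scalars $V(\xi)$ and $K(\xi)$, which are themselves continuous in $\xi$ by $(H_0)$--$(H_1)$ — wait, actually the paper only assumes boundedness, so I would instead argue directly from sequential continuity. Fix $\xi_0 \in \mathbb{R}^3$ and a sequence $\xi_n \to \xi_0$. I first record that for each $u \in H^1(\mathbb{R}^3) \setminus \{0\}$ the map $t \mapsto I_{\xi_n}(tu)$ is, by Lemma~\ref{propriedade phi}(iv), of the form $\frac{t^2}{2}A_n(u) + \frac{t^4}{4}B_n(u) - \frac{t^q}{q}D(u)$ with $A_n(u) = \int (|\nabla u|^2 + V(\xi_n)u^2)$, $B_n(u) = K(\xi_n)^2\int \phi_u u^2$ (using $\phi_{K(\xi)u}$ scaling in $K(\xi)$), and $D(u) = \int |u|^q$; since $q \in (4,6)$, this function has a unique maximizer $t_n(u) > 0$ and the max value is a continuous function of the coefficients $(A_n, B_n, D)$.

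Next I would establish the two one-sided inequalities $\limsup_n C(\xi_n) \le C(\xi_0)$ and $\liminf_n C(\xi_n) \ge C(\xi_0)$. For the $\limsup$ bound, let $u$ be the positive ground state for $\xi_0$ given by Proposition~\ref{prop1}, so $C(\xi_0) = \max_{t>0} I_{\xi_0}(tu)$. Assuming $V,K$ are continuous (which is implicit in the cited works; if only boundedness is wanted one restricts to a.e.\ statements, but I will assume continuity here as the natural hypothesis), we have $V(\xi_n) \to V(\xi_0)$ and $K(\xi_n) \to K(\xi_0)$, hence $A_n(u) \to A_0(u)$ and $B_n(u) \to B_0(u)$ while $D(u)$ is fixed; therefore $\max_{t>0} I_{\xi_n}(tu) \to \max_{t>0} I_{\xi_0}(tu) = C(\xi_0)$, and since $C(\xi_n) \le \max_{t>0} I_{\xi_n}(tu)$ we get $\limsup_n C(\xi_n) \le C(\xi_0)$. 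For the reverse inequality, let $u_n$ be the ground state for $\xi_n$, so $C(\xi_n) = I_{\xi_n}(u_n)$ and $I_{\xi_n}'(u_n) = 0$. From the $\limsup$ bound the sequence $C(\xi_n)$ is bounded, and the Nehari-type identity $I_{\xi_n}(u_n) - \frac14 I_{\xi_n}'(u_n)u_n = \frac14\|u_n\|_{\xi_n}^2 + (\frac14 - \frac1q)\int|u_n|^q$ together with $\alpha \le V(\xi_n) \le \sup V$ shows $\|u_n\|_{H^1}$ is bounded and bounded away from $0$. Up to a subsequence $u_n \rightharpoonup u_0$ in $H^1$.

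The main obstacle is the usual one for this kind of problem: passing to the limit requires ruling out loss of compactness (vanishing of mass at infinity), because $H^1(\mathbb{R}^3) \hookrightarrow L^q(\mathbb{R}^3)$ is not compact. I would handle this with the standard translation-and-concentration argument: if $\|u_n\|_{L^q} \to 0$ then the Nehari identity forces $\|u_n\|_{\xi_n} \to 0$, contradicting the lower bound; so by the Lions concentration-compactness lemma there are points $y_n \in \mathbb{R}^3$ and $R, \delta > 0$ with $\int_{B_R(y_n)} u_n^2 \ge \delta$, and since all coefficients $V(\xi_n), K(\xi_n)$ are \emph{constants} (independent of the space variable), the translated functions $\tilde u_n(\cdot) = u_n(\cdot + y_n)$ satisfy the same equation with the same energy; the $y_n$ may be taken bounded or one simply works with $\tilde u_n \rightharpoonup \tilde u_0 \neq 0$. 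Then $\tilde u_0$ is a nontrivial critical point of $I_{\xi_0}$, so $I_{\xi_0}(\tilde u_0) \ge C(\xi_0)$, and by weak lower semicontinuity of the norm together with Lemma~\ref{propriedade phi}(v) (which gives $\int \phi_{\tilde u_n}\tilde u_n^2 \to \int \phi_{\tilde u_0}\tilde u_0^2$ along a further subsequence via Brezis--Lieb or the weak convergence in $\mathcal D^{1,2}$) and Fatou applied to the $L^q$ term, one obtains
$$
\liminf_n C(\xi_n) = \liminf_n I_{\xi_n}(\tilde u_n) \ge I_{\xi_0}(\tilde u_0) \ge C(\xi_0).
$$
Combining the two inequalities gives $C(\xi_n) \to C(\xi_0)$, which proves continuity. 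I expect the compactness step and the careful accounting of the nonlocal term $\int \phi_u u^2$ under weak convergence to be where the real work lies; everything else is bookkeeping with the explicit polynomial-in-$t$ structure of $I_\xi(tu)$.
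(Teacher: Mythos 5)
Your proof is correct, but it takes a heavier route than the one the paper (implicitly) relies on: the paper gives no real argument here, deferring to \cite{Alves-Soares}, and its sketch points to a comparison/squeeze rather than to concentration--compactness. Two remarks. First, you are right to flag the hypothesis issue: since $C(\xi)=c_{V(\xi),K(\xi)}$ and $c_{\mu,\nu}$ is strictly monotone in $\mu$ (compare $I_{\mu,\nu}$ and $I_{\mu',\nu}$ along the ray through the ground state of the larger level), the lemma is simply false for a merely bounded, discontinuous $V$; continuity of $V$ and $K$ is an unstated but necessary assumption, so your caveat is a genuine catch rather than a defect of your proof. Second, the methodological difference lies in the lower-semicontinuity step. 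Your upper bound (testing $C(\xi_n)$ with the ground state at $\xi_0$ and using continuity of $\max_{t>0}\bigl(\tfrac{t^2}{2}A+\tfrac{t^4}{4}B-\tfrac{t^q}{q}D\bigr)$ in the coefficients) is the standard one. For the reverse inequality you extract, via Lions' lemma and translations, a nontrivial critical point of $I_{\xi_0}$ from the ground states $u_n$ of $I_{\xi_n}$; this works, and your use of the identity $I-\tfrac14 I'u$ neatly sidesteps having to pass to the limit in the nonlocal term of the energy. But it is more than is needed: one can instead test $C(\xi_0)\le\max_{t>0}I_{\xi_0}(tu_n)$ with the ground state $u_n$ of $I_{\xi_n}$, note that $\|u_n\|$ and the maximizing $t$ stay in a fixed bounded set (by the same Nehari identity, the uniform lower bound $C(\xi_n)\ge\alpha$ from \eqref{valorC}, and the upper bound from your first step), and conclude that $\max_{t}I_{\xi_0}(tu_n)-\max_t I_{\xi_n}(tu_n)=O\bigl(|V(\xi_0)-V(\xi_n)|+|K(\xi_0)^2-K(\xi_n)^2|\bigr)\to 0$, giving $C(\xi_0)\le\liminf_n C(\xi_n)$ with no compactness argument at all. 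What your route buys is a by-product (a limiting ground state at $\xi_0$) that the paper obtains elsewhere anyway; what the elementary comparison buys is brevity and independence from Lions' lemma and from identifying weak limits of $\phi_{u_n}$.
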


\begin{proof} The proof consists in proving that there exist sequences $(\zeta_n)$ and $(\lambda_n)$ in $\mathbb{R}^3$ such that
$C(\zeta_n), C(\lambda_n)\rightarrow C(\xi)$ as $n\rightarrow 0$ $\zeta_n\rightarrow\xi$, where
\begin{itemize}
\item []$\zeta_n\rightarrow\xi $ and $C(\zeta_n)\geq C(\xi)$, $\forall \,n$
\item []$\lambda_n\rightarrow \xi$ and $C(\lambda_n)\geq C(\xi)$, $\forall \,n$
\end{itemize}
as we know by Alves and Soares \cite{Alves-Soares} with slightly modifications.

\end{proof}

\begin{remark} The function $(\mu,\nu)\mapsto c_{\mu,\nu}$ is continuous, where $c_{\mu,\nu}$ is the minimax level of
\begin{eqnarray}\label{Imu}I_{\mu,\nu}(u)=\frac{1}{2}\int_{\mathbb{R}^3}(|\nabla u|^2+\mu u^2)\,dx+\frac{1}{4}\int_{\mathbb{R}^3}\nu\phi_u(x)
u^2\,dx-\frac{1}{q}\int_{\mathbb{R}^3}|u|^q\,dx.
\end{eqnarray}
\end{remark}

\begin{remark}  We denote by $C_\infty$ the minimax value related to the functional
\begin{eqnarray*}I_\infty(u)=\frac{1}{2}\int_{\mathbb{R}^3}(|\nabla u|^2+V_\infty u^2)\,dx+\frac{1}{4}\int_{\mathbb{R}^3}K_\infty\phi_u
u^2\,dx-\frac{1}{q}\int_{\mathbb{R}^3}|u|^q\,dx
\end{eqnarray*}
where $V_\infty$ and $K_\infty$, given by condition $(H_1)$, belong to $(0,\infty)$. Otherwise, define $C_\infty=\infty$.
$I_\infty(u)$ is well defined for $u\in H_\infty$, where $H_\infty$ is a Sobolev space endowed with the norm
$$\|u\|_\infty=\int_{\mathbb{R}^3}(|\nabla u|^2 +V_\infty u^2)\,dx
$$
equivalent to the usual Sobolev norm on $H^1(\mathbb{R}^3)$.
\end{remark}
\section{System $(\mathcal{S}_{1})$}\label{S-1}

Setting $\varepsilon=1$, in this section we consider the following system
\begin{equation*} \left \{ \begin{array}{ll}
-\Delta u+ V(x)u+K(x)\phi(x) u= |u|^{q-2}u & \mbox{in}\quad \mathbb{R}^{3}\\
-\Delta \phi=K(x)u^{2} & \mbox{in}\quad \mathbb{R}^{3}
\end{array}\right.\tag{$\mathcal{SP}_1$}
\end{equation*}
whose solutions are critical points of the corresponding functional
\begin{eqnarray*} I(u)=\frac{1}{2}\int_{\mathbb{R}^3}(|\nabla u|^2+V( x)u^2)\,dx+ \frac{1}{4}\int_{\mathbb{R}^3}K(x)\phi_{u}(x)
u^2\,dx-\frac{1}{p}\int_{\mathbb{R}^3}|u|^p\,dx
\end{eqnarray*}
which is well defined for $u\in H_1$, where
\begin{eqnarray*}H_1=\{u\in H^{1}(\mathbb{R}^3):\, \int_{\mathbb{R}^3}V(x)u^2\,dx<\infty\}
\end{eqnarray*}
with the same norm notation of the Sobolev space $H^1(\mathbb{R}^3)$.

Similar to the autonomous case, the functional $I$ satisfies the mountain pass geometry, then there exists a sequence $(u_n)\subset H_1$ such that
\begin{eqnarray}\label{5}I(u_n)\rightarrow c \quad \mbox{and} \quad I'(u_n)\rightarrow 0
\end{eqnarray}
where
\begin{eqnarray*}c=\inf_{\gamma\in\Gamma}\max_{0\leq t\leq 1}I(\gamma(t))
\end{eqnarray*}
\noindent and
\begin{eqnarray*}\Gamma=\{\gamma\in\mathcal{C}([0,1],H_{1}(\mathbb{R}^{3}))|\gamma(0)=0, I(\gamma(1))<0 \}.
\end{eqnarray*}

An important tool in our analysis is the following theorem:

\begin{theorem}\label{critical value}
If $c<C_{\infty}$, then $c$ is a critical value for $I$.
\end{theorem}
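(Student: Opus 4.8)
The plan is to recover compactness for the Palais--Smale sequence $(u_n)$ from \eqref{5} by ruling out the only obstruction, namely loss of mass at infinity, and to do this precisely when $c<C_\infty$. First I would observe that $(u_n)$ is bounded in $H_1$: testing $I'(u_n)$ against $u_n$ and combining with $I(u_n)\to c$ gives, using $q>4$,
\begin{equation*}
c+o(1)+o(1)\|u_n\|_1 = I(u_n)-\tfrac{1}{q}I'(u_n)u_n \geq \Big(\tfrac12-\tfrac1q\Big)\|u_n\|_1^2 + \Big(\tfrac14-\tfrac1q\Big)\int_{\mathbb{R}^3}K(x)\phi_{u_n}u_n^2\,dx,
\end{equation*}
and since $q<6$ both coefficients on the right are positive, while Lemma~\ref{propriedade phi}(iii) shows the Coulomb term is nonnegative; hence $\|u_n\|_1$ is bounded. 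Passing to a subsequence, $u_n\rightharpoonup u$ in $H_1$, $u_n\to u$ in $L^s_{loc}$ for $s\in[2,6)$ and a.e.; by Lemma~\ref{propriedade phi}(v) and standard weak-continuity arguments $I'(u)=0$, so $u$ is a (possibly trivial) critical point of $I$.

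The heart of the matter is to show $u\neq 0$, equivalently that no mass escapes to infinity. I would argue by contradiction: suppose $u=0$. A concentration-compactness / vanishing dichotomy applied to $(u_n)$ shows that vanishing is impossible (it would force $\|u_n\|_q\to 0$, hence $\|u_n\|_1\to 0$ and $c=0$, contradicting $c\ge\alpha>0$), so there are points $y_n$ with $|y_n|\to\infty$ (since $u=0$) along which $\tilde u_n:=u_n(\cdot+y_n)\rightharpoonup w\neq 0$. Using hypothesis $(H_1)$, $V(x+y_n)\to V_\infty$ and $K(x+y_n)\to K_\infty$ in the relevant sense (via $\liminf$ and a lower-semicontinuity/Fatou argument in the quadratic and Coulomb terms), one checks that $w$ is a nontrivial critical point of $I_\infty$, so $I_\infty(w)\ge C_\infty$. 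Then a Brezis--Lieb-type splitting, together with the fact that $I(u_n)=I(u_n-\text{(translated bump)})+I_\infty(w)+o(1)$ and that the residual energy is asymptotically nonnegative (again because the effective functional at infinity dominates, using $(H_0)$–$(H_1)$ so that $V\ge$ something and $K\ge$ something near the relevant scales), yields $c\ge I_\infty(w)\ge C_\infty$, contradicting $c<C_\infty$. Hence $u\neq 0$.

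Finally, with $u\neq 0$ a critical point of $I$, I would upgrade to $I(u)=c$. Writing $u_n=u+v_n$ with $v_n\rightharpoonup 0$, the Brezis--Lieb lemma for the $L^q$ term and the weak-continuity properties of $\phi$ from Lemma~\ref{propriedade phi} give $I(u_n)=I(u)+J(v_n)+o(1)$ where $J$ is (up to lower-order Coulomb cross terms that vanish) the ``functional at infinity'' evaluated on the escaping part; since any nonzero weak limit of a remaining bump would again contribute at least $C_\infty>c$, the splitting forces $J(v_n)\to 0$ and in fact $v_n\to 0$ strongly, so $I(u)=c$ and $c$ is attained. The main obstacle is the middle step: making the ``energy at infinity'' comparison rigorous when $V_\infty,K_\infty$ are only $\liminf$'s (not limits) and handling the nonlocal Coulomb term under translations, where one must be careful that $\phi_{\tilde u_n}\to\phi_w$ only weakly in $\mathcal{D}^{1,2}$ and control the cross terms $\int K(x+y_n)\phi_{\tilde u_n}\tilde u_n^2$ via Fatou to get the correct inequality direction. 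This is the standard but delicate part; everything else is routine given Lemma~\ref{propriedade phi} and Lemma~\ref{mountain pass geometry}.
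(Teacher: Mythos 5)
Your overall strategy --- boundedness, the weak limit is a critical point, rule out $u=0$ by comparing the escaping mass with a level at infinity --- is in the right spirit, but the execution of the comparison has a genuine gap that the hypotheses of this paper do not let you close. You translate the bump, set $\tilde u_n=u_n(\cdot+y_n)$ with $|y_n|\to\infty$, claim the nontrivial weak limit $w$ is a critical point of $I_\infty$ so that $I_\infty(w)\ge C_\infty$, and then invoke a Brezis--Lieb splitting with $I_\infty(w)$ appearing as an exact term. Both steps require $V(x+y_n)\to V_\infty$ and $K(x+y_n)\to K_\infty$ at least a.e.\ or locally in measure. But $(H_1)$ only gives $V_\infty=\liminf_{|x|\to\infty}V(x)$: the translated potentials $V(\cdot+y_n)$ may oscillate and converge (weak-$*$ in $L^\infty$, along a subsequence) to some nonconstant $\bar V\ge V_\infty$ rather than to the constant $V_\infty$. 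Consequently $w$ solves a limit equation with coefficients $\bar V,\bar K$, not the constant-coefficient problem; the inequality $I_\infty(w)\ge C_\infty$ is not justified as stated; and the energy splitting $I(u_n)=I(u_n-\mbox{bump})+I_\infty(w)+o(1)$ fails as an identity. Fatou only gives one-sided bounds on the quadratic and Coulomb terms, which is not enough to place $w$ on the Nehari manifold of $I_\infty$. You flag this yourself as ``the main obstacle,'' but it is precisely the step that needs a different device, so the proof as proposed is incomplete.

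The paper avoids extracting a limit profile altogether. Assuming $u\equiv 0$, it first notes (via the lemma from \cite{Alves-Souto-Soares} and Lions' lemma) that $\int_{B_R(0)}u_n^2\,dx\to 0$ while $\int_{\mathbb{R}^3}|u_n|^q\,dx\not\to 0$. It then fixes arbitrary $\mu<V_\infty$, $\nu<K_\infty$ and $R$ with $V>\mu$, $K>\nu$ off $B_R(0)$, and compares directly at the level of minimax values: for $t$ in bounded sets,
\begin{eqnarray*}
I(tu_n) &\geq & I_{\mu,\nu}(tu_n)+\frac{t^2}{2}\int_{B_R(0)}(V(x)-\mu)u_n^2\,dx+\frac{t^4}{4}\int_{B_R(0)}(K(x)-\nu)\phi_{u_n}u_n^2\,dx\\
&=& I_{\mu,\nu}(tu_n)+o_n(1),
\end{eqnarray*}
where $I_{\mu,\nu}$ is the autonomous functional (\ref{Imu}), the discarded integrals over $\mathbb{R}^3\setminus B_R(0)$ being nonnegative and the remaining ones vanishing because $u_n\to 0$ in $L^2(B_R(0))$. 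Choosing $t=\tau_n$ with $I_{\mu,\nu}(\tau_n u_n)=\max_{t\ge 0}I_{\mu,\nu}(tu_n)\ge c_{\mu,\nu}$, and using $\max_{t\ge 0}I(tu_n)=I(u_n)+o_n(1)=c+o_n(1)$, one gets $c\ge c_{\mu,\nu}$ for every such pair $(\mu,\nu)$; letting $\mu\to V_\infty$, $\nu\to K_\infty$ and using the continuity of $(\mu,\nu)\mapsto c_{\mu,\nu}$ yields $c\ge C_\infty$, the desired contradiction. If you wish to keep a profile-decomposition framework, you would have to run it against the family $I_{\mu,\nu}$ and pass to the limit in $(\mu,\nu)$ at the end, which amounts to the same argument.
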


\begin{proof} From (\ref{5}), $(u_n)$ is bounded in $H_1$. As a consequence, passing to a subsequence if necessary, $u_n\rightharpoonup u$ in $H_1$.
From Proposition \ref{propriedade phi} (v), $\phi_{u_n}\rightharpoonup \phi_u$ in $\mathcal{D}^{1,2}(\mathbb{R}^3)$, as $n\rightarrow\infty$. Then,
$(u, \phi_u)$ is a weak solution of ($\mathcal{SP}_1$). Similar to the proof of Lemma \ref{mountain pass geometry}, $I(u)=c$.
It remains to show that $u\neq 0$.

By contradiction, consider $u\equiv 0$.

From Alves, Souto and Soares \cite{Alves-Souto-Soares}, if there exist constants $\eta$, $R$ such that
\begin{eqnarray*}\liminf_{n\rightarrow+\infty}\int_{B_R(0)}u_n^2\,dx\geq\eta>0
\end{eqnarray*}
then $u\neq 0$.

Hence, there exists a subsequence of $(u_n)$, still denoted by $(u_n)$, such that
\begin{eqnarray*}\lim_{n\rightarrow+\infty}\int_{B_R(0)}u_n^2\,dx=0.
\end{eqnarray*}

Let $\mu$ and $\nu$ be such that
\begin{eqnarray*}
\inf_{x\in\mathbb{R}^3}V(x)<\mu<\liminf_{|x|\rightarrow\infty}V(x)=V_{\infty}\\
\inf_{x\in\mathbb{R}^3}K(x)<\nu<\liminf_{|x|\rightarrow\infty}K(x)=K_{\infty}
\end{eqnarray*}
and take $R>0$ such that
\begin{eqnarray*}
V(x)>\mu, \quad \forall\,x\in \mathbb{R}^3\backslash B_R(0)\\
K(x)>\nu, \quad \forall\,x\in \mathbb{R}^3\backslash B_R(0).
\end{eqnarray*}

For each $n\in \mathbb{N}$, there exist $t_n>0$, $t_n\rightarrow 1$ such that $\displaystyle I(t_n u_n)=\max_{t\geq 0}I(t u_n)$.
The convergence of $(t_n)$ follows from (\ref{5}). In fact, since
\begin{eqnarray*} \|u_n\|^2+\int_{\mathbb{R}^3}K(x)\phi_{u_n} u_n^2\,dx=\int_{\mathbb{R}^3}|u_n|^q\,dx+o_n(1)
\end{eqnarray*}
we have
\begin{eqnarray*} t_n^2\|u_n\|^2+t_n^4\int_{\mathbb{R}^3}K(x)\phi_{u_n} u_n^2\,dx=t_n^q\int_{\mathbb{R}^3}|u_n|^q\,dx+o_n(1).
\end{eqnarray*}

Then,
\begin{eqnarray*} (1-t_n^2)\|u_n\|^2=(t_n^{q-2}-t_n^2)\int_{\mathbb{R}^3}|u_n|^q\,dx + o_n(1)
\end{eqnarray*}
Suppose $t_n\rightarrow t_0$. Letting $n\rightarrow+\infty$,
\begin{eqnarray*} 0=(t_0^2-1)\ell_1+t_0^2(t_0^{q-4}-1)\ell_2
\end{eqnarray*}
where $\ell_1,\ell_2>0$. Hence, $t_0=1$.

Consequently, we have
\begin{eqnarray*}I(u_n)-I(t_n u_n)&=&\frac{1-t_n^2}{2}\|u_n\|^2+\frac{1}{4}(1-t_n^4)\int_{\mathbb{R}^3}K(x)\phi_{u_n} u_n^2\,dx+
\frac{t_n^q-1}{q}\int_{\mathbb{R}^3}|u_n|^q\,dx\\
&=& o_n(1)
\end{eqnarray*}
which implies, for every $t\geq 0$,
\begin{eqnarray}\label{bla}
I(u_n)&\geq & I(t u_n)+o_n(1)\nonumber\\
&=& \frac{t^2}{2}\int_{\mathbb{R}^3}|\nabla u_n|^2+V(x)u_n^2\,dx + \frac{t^4}{4}\int_{\mathbb{R}^3}K(x)\phi_{u_n} u_n^2\,dx-
\frac{t^q}{q}\int_{\mathbb{R}^3}|u_n|^q\,dx+\nonumber\\
&&+I_{\mu,\nu}(t u_n)-I_{\mu,\nu}(t u_n)+o_n(1)\nonumber\\
&\geq & \frac{t^2}{2}\int_{B_R(0)}(V(x)-\mu)u_n^2\,dx +\frac{t^4}{4}\int_{B_R(0)}(K(x)-\nu)\phi_{u_n} u_n^2\,dx+ \nonumber \\
&& +I_{\mu,\nu}(t u_n)+o_n(1),
\end{eqnarray}
where $I_{\mu,\nu}(u)$ is given by (\ref{Imu}).

Consider $\tau_n$ such that $\displaystyle I_{\mu,\nu}(\tau_n u_n)=\max_{t\geq 0}I_{\mu,\nu}(tu_n)$. As in the above arguments, $\tau_n\rightarrow 1$.
Letting $t=\tau_n$ in (\ref{bla}), we have
\begin{eqnarray*} I(u_n)\geq c_{\mu,\nu} + \frac{\tau_n^2}{2}\int_{B_R(0)}(V(x)-\mu)u_n^2\,dx+
\frac{\tau_n^4}{4}\int_{B_R(0)}(K(x)-\nu)\phi_{u_n} u_n^2\,dx+o_n(1).
\end{eqnarray*}

Taking the limit $n\rightarrow +\infty$, we have $c\geq c_{\mu,\nu}$. Next, taking $\mu\rightarrow V_\infty$ and $\nu\rightarrow K_\infty$, we obtain
$c\geq C_\infty$, proving Theorem \ref{critical value}.

\end{proof}
\section{Proof of Theorem \ref{principal}}

This section is devoted to study the existence, regularity and the asymptotic behavior of solutions for the system ($\mathcal{SP}_{\varepsilon}$) for small
$\varepsilon$. The proof of Theorem \ref{principal} is divided into three subsections as follows:

\subsection{Existence of a solution}

\begin{theorem} Suppose $(H_0)-(H_1)$ hold and consider
\begin{equation}\tag{$C^\infty$} C_\infty>\inf_{\xi\in\mathbb{R}^3} C(\xi)
\end{equation}
Then, there exists $\varepsilon^*>0$ such that system ($\mathcal{S}_{\varepsilon}$) has a positive solution for every
$0<\varepsilon<\varepsilon^*$.
\end{theorem}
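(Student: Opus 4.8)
The plan is to obtain the solution of $(\mathcal{S}_\varepsilon)$ as a mountain-pass critical point of $I_\varepsilon$ and to control its energy level via the autonomous functionals $I_\xi$. First I would record that $I_\varepsilon$ satisfies the mountain-pass geometry uniformly for small $\varepsilon$ (this is routine from $(H_0)$ and the Sobolev embedding, exactly as in Lemma \ref{mountain pass geometry}), so the minimax level
\begin{eqnarray*}
c_\varepsilon=\inf_{\gamma\in\Gamma_\varepsilon}\max_{0\le t\le 1}I_\varepsilon(\gamma(t))
\end{eqnarray*}
is well defined and positive, and there is a Cerami (or Palais--Smale) sequence at this level. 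The heart of the argument is the energy estimate
\begin{eqnarray*}
\limsup_{\varepsilon\to 0}c_\varepsilon\le \inf_{\xi\in\mathbb{R}^3}C(\xi)<C_\infty.
\end{eqnarray*}
To prove the upper bound, pick $\xi_0$ realizing (or nearly realizing) $\inf_\xi C(\xi)$, let $w$ be the positive ground state of $(\mathcal{SP}_{\xi_0})$ from Proposition \ref{prop1}, and use the test path $t\mapsto t\,w(\cdot-\xi_0/\varepsilon)\eta(\varepsilon\,\cdot)$ with a cut-off $\eta$; as $\varepsilon\to0$ the potential terms $V(\varepsilon x)$, $K(\varepsilon x)$ converge to $V(\xi_0)$, $K(\xi_0)$ on the support of the (rescaled, concentrated) test function, and the nonlocal term $\varepsilon^2K(\varepsilon x)\phi_{\varepsilon,u}u^2$ converges to the autonomous Coulomb term $\frac14\int K(\xi_0)\phi_w w^2$ by the computation preceding $(\mathcal{S}_\varepsilon)$ together with Lemma \ref{propriedade phi}. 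Hence $\max_{t\ge0}I_\varepsilon(t\,w(\cdot-\xi_0/\varepsilon)\eta)\to I_{\xi_0}(w)=C(\xi_0)$, giving the claimed $\limsup$.

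Next I would upgrade the Cerami sequence to an actual critical point. Boundedness of the sequence $(u_n)$ in $H_\varepsilon$ follows from the standard combination of $I_\varepsilon(u_n)$ and $I_\varepsilon'(u_n)u_n$ using $q\in(4,6)$ (the quadratic and quartic terms have good signs, as in the computation of $t_0=1$ in the proof of Theorem \ref{critical value}). So $u_n\rightharpoonup u_\varepsilon$ in $H_\varepsilon$, and $\phi_{\varepsilon,u_n}\rightharpoonup\phi_{\varepsilon,u_\varepsilon}$ by Lemma \ref{propriedade phi}(v), whence $u_\varepsilon$ is a weak solution of $(\mathcal{S}_\varepsilon)$. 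To see $u_\varepsilon\neq 0$ for small $\varepsilon$ I would argue by contradiction and use a splitting argument in the spirit of Theorem \ref{critical value}: if $u_\varepsilon\equiv0$, then either vanishing occurs, in which case the nonlinear term disappears and $c_\varepsilon\to0$, contradicting $c_\varepsilon\ge\alpha>0$; or the mass of $u_n$ escapes to infinity, and comparing with $I_\infty$ (translating the concentration and using $(H_1)$ so that $V(\varepsilon x),K(\varepsilon x)$ are eventually bounded below by $\mu\to V_\infty$, $\nu\to K_\infty$ outside a large ball) forces $\liminf_\varepsilon c_\varepsilon\ge C_\infty$, contradicting the energy estimate of the previous paragraph. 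Either way we reach a contradiction, so $u_\varepsilon\neq0$.

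Finally, positivity: replacing the nonlinearity $|u|^{q-2}u$ by $(u^+)^{q-1}$ in the variational formulation (which does not change the mountain-pass level because the ground state of the autonomous problem is positive) forces $u_\varepsilon\ge0$; then $u_\varepsilon\in H^1(\mathbb{R}^3)$ solves $-\Delta u_\varepsilon + (V(\varepsilon x)+\varepsilon^2K(\varepsilon x)\phi_{\varepsilon,u_\varepsilon})u_\varepsilon = u_\varepsilon^{q-1}\ge0$ with $V\ge\alpha>0$, so by elliptic regularity $u_\varepsilon$ is continuous and the strong maximum principle gives $u_\varepsilon>0$. Undoing the scaling, $v_\varepsilon(x)=u_\varepsilon(x/\varepsilon)$ is a positive solution of $(\mathcal{SP}_\varepsilon)$. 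I expect the main obstacle to be the nonzero-weak-limit step: making the dichotomy rigorous requires carefully handling the nonlocal term $\varepsilon^2K(\varepsilon x)\phi_{\varepsilon,u_n}u_n^2$ under translations and weak convergence, and checking that the comparison functionals $I_{\mu,\nu}$ and $I_\infty$ interact correctly with the $\varepsilon$-dependent potentials — essentially an $\varepsilon$-dependent refinement of the Brezis--Lieb / concentration-compactness bookkeeping already used for Theorem \ref{critical value}.
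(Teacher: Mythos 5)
Your proposal is correct and follows essentially the same route as the paper: the decisive step is the upper bound $\limsup_{\varepsilon\to0}c_\varepsilon\le C(\xi_0)<C_\infty$, obtained by testing $I_\varepsilon$ on the translated autonomous ground state $w(\cdot-\xi_0/\varepsilon)$ and using $t_\varepsilon\to1$, after which existence of a nontrivial critical point at level $c_\varepsilon<C_\infty$ follows from the compactness statement of Theorem~\ref{critical value}. The only (cosmetic) difference is that the paper simply invokes Theorem~\ref{critical value}, whereas you re-derive its vanishing/escape-to-infinity dichotomy in the $\varepsilon$-dependent setting, which is if anything slightly more careful.
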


\begin{proof}
By hypothesis ($C^\infty$), there exists $b\in\mathbb{R}^3$ and $\delta>0$ such that
\begin{eqnarray}\label{8}C(b)+\delta<C_{\infty}.
\end{eqnarray}

Define $u_\varepsilon(x)=u(x-\frac{b}{\varepsilon})$, where, from Proposition \ref{prop1}, $u$ is a solution of the autonomous
Schrödinger-Poisson system ($\mathcal{SP}_b$)
\begin{eqnarray*} \left \{ \begin{array}{ll}
 -\Delta u+ V(b)u+K(b)\phi(b) u= |u|^{q-2}u & \mbox{in}\quad \mathbb{R}^{3}\\
-\Delta \phi=K(b)u^{2} & \mbox{in}\quad \mathbb{R}^{3}
\end{array}\right.
\end{eqnarray*}
with $I_b(u)=C(b)$.

Let $t_\varepsilon$ be such that $\displaystyle I_\varepsilon(t_\varepsilon u_\varepsilon)=\max_{t\geq 0}I_\varepsilon(t u_\varepsilon)$.
Similar to the proof of Theorem \ref{critical value}, we have $\displaystyle \lim_{\varepsilon\rightarrow 0}t_\varepsilon=1$.

Then, since
\begin{eqnarray*}
c_\varepsilon= \inf_{\gamma\in\Gamma}\max_{0\leq t\leq 1}I_\varepsilon(\gamma(t))=\inf_{\genfrac{}{}{0pt}{}{u\in H^1}{u\neq 0} }
\max_{ t\geq 0}I_\varepsilon(tu)\leq
\max_{ t\geq 0}I_\varepsilon(t u_\varepsilon)=I_\varepsilon(t_\varepsilon u_\varepsilon)
\end{eqnarray*}
we have
\begin{eqnarray*}
\limsup_{\varepsilon\rightarrow 0} c_\varepsilon\leq \limsup_{\varepsilon\rightarrow 0} I_\varepsilon(t_\varepsilon u_\varepsilon)
=I_b(u)=C(b)<C(b)+\delta,
\end{eqnarray*}
which implies that, from (\ref{8})
\begin{eqnarray*}\limsup_{\varepsilon\rightarrow 0} c_\varepsilon< C_\infty.
\end{eqnarray*}

Therefore, there exists $\varepsilon^*>0$ such that $c_\varepsilon<C_\infty $ for every $0<\varepsilon<\varepsilon^*$. In view of Theorem
\ref{critical value}, system $(\mathcal{S}_\varepsilon)$ has a positive solution for every $0<\varepsilon<\varepsilon^*$.
\end{proof}

\subsection{Regularity of the solution}

The first result is a suitable version of Brezis and Kato \cite{Brezis-Kato} and the second one is a particular version of Theorem 8.17 from Gilbarg and
 Trudinger \cite{Gilbarg-Trudinger}.

\newpage

\begin{proposition}\label{R1}
Consider $u\in H^1(\mathbb{R}^3)$ satisfying
\begin{eqnarray*}-\Delta u+b(x)u=f(x,u)\quad \mbox{in}\,\, \mathbb{R}^3
\end{eqnarray*}
where $b: \mathbb{R}^3\rightarrow \mathbb{R}$ is a $L^\infty_{loc}(\mathbb{R}^3)$ function and $f:\mathbb{R}^3\rightarrow \mathbb{R}$
is a Caratheodory function such that
\begin{eqnarray*}0\leq f(x,s)\leq C_f(s^r+s), \quad \forall \, s>0,\, x\in\mathbb{R}^3.
\end{eqnarray*}

Then, $u\in L^t(\mathbb{R}^3)$ for every $t\geq 2$. Moreover, there exists a positive constant $C=C(t,C_f)$ such that
\begin{eqnarray*}\|u\|_{L^t(\mathbb{R}^3)}\leq C\|u\|_{H^1(\mathbb{R}^3)}.
\end{eqnarray*}
\end{proposition}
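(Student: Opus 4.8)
The plan is to run the classical Brezis–Kato / Moser iteration. First I would absorb the linear part: since $b\in L^\infty_{loc}$ and $0\le f(x,s)\le C_f(s^r+s)$ with $r<5$ (the subcritical exponent coming from $q\in(4,6)$, so $r\le q-1<5$), I rewrite the equation as $-\Delta u = g(x)$ where $g(x)=f(x,u)-b(x)u$ and note that $g$ is controlled by $|u|^r+|u|$ plus the locally bounded term $|b(x)u|$; more precisely, for the iteration it is cleaner to write $-\Delta u + u = h(x,u)$ with $0\le h(x,u)\le C(|u|^r+|u|)$ after moving $u-b(x)u$ around, using that $b$ is bounded on the relevant sets. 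The key structural input is that $r+1<6=2^*$, so $|u|^{r-1}\in L^{3/(r-1)}$ with $3/(r-1)>3/4>1$ when $r<5$ — actually one only needs $|u|^{r-1}\in L^{s}$ for some $s>3/2$, which holds since $r-1<4$ gives $3/(r-1)>3/4$... I would instead phrase it as: writing the coefficient of the linear-in-$u$ part of the right side as $a(x)=h(x,u)/u \le C(|u|^{r-1}+1)$, we have $a\in L^{3/2}_{loc}+L^\infty$ because $(r-1)\cdot(3/2)\le (q-2)(3/2)<6$. This is exactly the Brezis–Kato hypothesis.

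Next I would carry out the iteration itself. Fix $\beta\ge 1$ and $L>0$, and test the equation against $u u_L^{2(\beta-1)}$ where $u_L=\min\{|u|,L\}$ is the truncation; this is an admissible $H^1$ test function. Integrating by parts and using $|\nabla(u u_L^{\beta-1})|^2 \le C\beta^2 (\text{stuff involving } |\nabla u|^2 u_L^{2(\beta-1)})$, together with the Sobolev inequality $\|u u_L^{\beta-1}\|_{6}^2 \le S\|\nabla(u u_L^{\beta-1})\|_2^2$, one obtains
\begin{eqnarray*}
\|u u_L^{\beta-1}\|_6^2 \le C\beta^2\Big(\int (1+a(x))\, u^2 u_L^{2(\beta-1)}\,dx\Big).
\end{eqnarray*}
Splitting $a$ as $a=a_1+a_2$ with $\|a_1\|_{3/2}$ small (chosen after $\beta$ is fixed, or uniformly small on the tail) and $a_2\in L^\infty$, the term $\int a_1 u^2 u_L^{2(\beta-1)}$ is absorbed into the left side via Hölder with exponents $(3/2, 3)$ and Sobolev, while the rest is bounded by $C\beta^2\|u\|_{2\beta}^{2\beta}$-type quantities. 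Letting $L\to\infty$ (monotone convergence) promotes $u\in L^{2\beta}$ to $u\in L^{6\beta}$ with a quantitative bound $\|u\|_{6\beta}\le (C\beta)^{1/\beta}\|u\|_{2\beta}$. Starting from $\beta=1$ (i.e. $u\in L^2$, indeed $u\in L^6$ by Sobolev) and iterating $\beta_{k+1}=3\beta_k$ gives $u\in L^t$ for every $t\ge 2$; tracking the constants, the product $\prod_k (C\beta_k)^{1/\beta_k}$ converges, yielding $\|u\|_{L^t}\le C(t,C_f)\|u\|_{H^1}$.

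The main obstacle — or rather the point requiring care — is the splitting of the coefficient $a(x)\le C(|u|^{r-1}+1)$ into a small-$L^{3/2}$ part plus an $L^\infty$ part so that the critical term can be absorbed on the left-hand side at each step; this is where subcriticality $r<5$ (equivalently $(r-1)<4$, so $|u|^{r-1}\in L^{3/2}$ since $(r-1)\cdot 3/2<6$) is essential, and one must make the smallness uniform enough that the resulting constants do not blow up along the iteration. The truncation argument to justify using $u u_L^{2(\beta-1)}$ as a test function (it is bounded and in $H^1$, whereas $|u|^{2\beta-1}$ might not be a priori) is routine but must be included. Once $u\in L^t$ for all finite $t$, the stated estimate follows by keeping the constants explicit; I would remark that this also feeds into the second (Gilbarg–Trudinger) result to get $u\in L^\infty$ and eventually $C^{1,\alpha}$, but that is the content of the next proposition.
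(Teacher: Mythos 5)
The paper offers no proof of this proposition at all: it is announced as ``a suitable version of Brezis and Kato'' and simply referred to \cite{Brezis-Kato}. Your Moser-type iteration --- testing against the truncations $u\,u_L^{2(\beta-1)}$, the Sobolev step promoting $L^{2\beta}$ to $L^{6\beta}$, and the decomposition of the coefficient $a(x)\le C(|u|^{r-1}+1)$ into a small-$L^{3/2}$ part plus an $L^\infty$ part --- is exactly the argument behind that citation, and the subcriticality condition $(r-1)\cdot\tfrac32<6$ is the correct structural input (your first attempt, ``$|u|^{r-1}\in L^{3/(r-1)}$ with $3/(r-1)>3/4$,'' is garbled, but the corrected version you give immediately afterwards is right).

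There is, however, one genuine gap: the treatment of $b(x)u$. You propose to move it to the right-hand side and write $-\Delta u+u=h(x,u)$ with $0\le h\le C(|u|^r+|u|)$, ``using that $b$ is bounded on the relevant sets.'' For a global estimate on all of $\mathbb{R}^3$, local boundedness of $b$ is of no use: the iteration integrates over the whole space, and $h=f(x,u)+(1-b(x))u$ is neither nonnegative nor dominated by $C(|u|^r+|u|)$ unless $b\in L^\infty(\mathbb{R}^3)$, which is not assumed. The correct (and standard) move is not to transfer $b(x)u$ at all but to keep it on the left and exploit its sign: where the proposition is applied one has $b(x)=V(\varepsilon_n x+\varepsilon_n y_{\varepsilon_n})+\varepsilon_n^2K(\varepsilon_n x+\varepsilon_n y_{\varepsilon_n})\phi_{u_n}\ge\alpha>0$ by $(H_0)$ and the positivity of $\phi_{u_n}$, so testing against the nonnegative function $u\,u_L^{2(\beta-1)}$ (the solutions here are positive; otherwise use the $|u|$-variant) gives $\int b\,u^2u_L^{2(\beta-1)}\,dx\ge0$ and the term is simply discarded. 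Without a sign condition (or a global bound) on $b$, the statement is not reachable by your argument, so you should either add the hypothesis $b\ge 0$ or record that it holds in the only place the proposition is used. A smaller point: since the splitting $a=a_1+a_2$ with $\|a_1\|_{3/2}$ small is chosen after $\beta$ is fixed, the constant you produce depends on $u$ through $\||u|^{r-1}\|_{3/2}\le C\|u\|_{H^1}^{r-1}$, not only on $(t,C_f)$ as the statement asserts; this is harmless in the paper because the relevant sequence $(u_n)$ converges strongly in $H^1(\mathbb{R}^3)$, so the constants are uniform in $n$, but it deserves a remark.
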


\begin{proposition}\label{R2}
Consider $t>3$ and $g\in L^{\frac{t}{2}}(\Omega)$, where $\Omega$ is an open subset of $\mathbb{R}^3$. Then, if $u\in H^1(\Omega)$ is a subsolution
of
\begin{eqnarray*} \Delta u=g \quad \mbox{in} \,\,\Omega
\end{eqnarray*}
we have, for any $y\in\mathbb{R}^3$ and $B_{2R}(y)\subset \Omega$, $R>0$
\begin{eqnarray*}\sup_{B_{R}(y)}u \leq C \Big(\|u^+\|_{L^2(B_{2R}(y))}+\|g\|_{L^{\frac{t}{2}}(B_{2R}(y)) }\Big)
\end{eqnarray*}
where $C=C(t,R)$.
\end{proposition}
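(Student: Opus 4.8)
The statement is essentially Theorem 8.17 of Gilbarg--Trudinger specialized to the flat Laplacian on a ball in $\mathbb{R}^3$, with $t > 3$ playing the role of $q$ in the standard statement (the condition $q > n = 3$ guarantees $L^{q/2}$ data lands in the right regularity class). The plan is to reproduce the De Giorgi--Moser iteration argument. First I would reduce to a model situation: by the scaling $x \mapsto y + Rx$ I may assume $y = 0$, $R = 1$, so that I work on $B_2 \subset \Omega$ and must bound $\sup_{B_1} u$. Replacing $u$ by $u^+$ (which is still a subsolution of $\Delta u \geq$ the same $g$ where $u > 0$, and trivially controlled where $u \leq 0$), I may assume $u \geq 0$. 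Finally, by a standard normalization I can assume $\|u\|_{L^2(B_2)} + \|g\|_{L^{t/2}(B_2)} \le 1$ and aim to show $\sup_{B_1} u \le C$; the homogeneous estimate then follows by applying this to $u/\big(\|u\|_{L^2(B_2)} + \|g\|_{L^{t/2}(B_2)}\big)$.

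The core is the Moser iteration. For $\beta \ge 1$ and a cutoff $\eta \in C_0^\infty(B_{r_2})$ with $\eta \equiv 1$ on $B_{r_1}$, $1 \le r_1 < r_2 \le 2$, test the subsolution inequality against $\eta^2 \bar u^{2\beta - 1}$, where $\bar u = u + k$ with $k = \|g\|_{L^{t/2}(B_2)}$ (or $k = $ a small constant, to keep things away from zero and to absorb the inhomogeneity). Integrating by parts gives, after the usual Cauchy--Schwarz/Young manipulations and absorbing the term with $\nabla \eta$,
\begin{eqnarray*}
\int_{B_2} \eta^2 |\nabla (\bar u^{\beta})|^2 \, dx \;\le\; C \beta^2 \int_{B_2} \big(|\nabla \eta|^2 + \eta^2\big)\bar u^{2\beta}\, dx,
\end{eqnarray*}
where the $g$-term has been handled by writing $|g|\, \eta^2 \bar u^{2\beta-1} \le |g/\bar u|\,\eta^2 \bar u^{2\beta}$ and using $\|g/\bar u\|_{L^{t/2}} \le 1$ together with the Hölder/Sobolev interpolation available because $t/2 > 3/2 = n/2$. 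Then I apply the Sobolev inequality in $\mathbb{R}^3$, $\|w\|_{L^6} \le C\|\nabla w\|_{L^2}$, to $w = \eta \bar u^\beta$, to obtain
\begin{eqnarray*}
\Big(\int_{B_{r_1}} \bar u^{6\beta}\, dx\Big)^{1/3} \;\le\; \frac{C\beta^2}{(r_2 - r_1)^2} \int_{B_{r_2}} \bar u^{2\beta}\, dx.
\end{eqnarray*}
Setting $\chi = 3$, $p_m = 2\chi^m$, $r_m = 1 + 2^{-m}$, this reads $\|\bar u\|_{L^{p_{m+1}}(B_{r_{m+1}})} \le (C\chi^m)^{1/\chi^m}\|\bar u\|_{L^{p_m}(B_{r_m})}$; iterating and using convergence of $\sum m\chi^{-m}$ and $\sum \chi^{-m}$ yields $\sup_{B_1} \bar u = \lim_m \|\bar u\|_{L^{p_m}(B_{r_m})} \le C \|\bar u\|_{L^2(B_2)} \le C\big(\|u\|_{L^2(B_2)} + \|g\|_{L^{t/2}(B_2)}\big)$, which is the claim after unwinding the normalization and scaling. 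Rescaling introduces the stated dependence $C = C(t,R)$ (the $R$-dependence enters through the Sobolev constant on $B_{2R}$ and the cutoff gradients).

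The main obstacle — really the only non-mechanical point — is the correct treatment of the inhomogeneous term $g$: one must choose the shift $k$ and organize the estimate so that $\|g\|_{L^{t/2}}$ is genuinely absorbed into $\bar u$ without accumulating unbounded constants across the infinitely many iteration steps, which is exactly where the hypothesis $t > 3$ (equivalently $t/2 > n/2$) is used, via the compact Sobolev-type embedding $\|v\|_{L^2(B_2)} \le C\|\nabla v\|_{L^2(B_2)}^{\theta}\|v\|_{L^{?}}^{1-\theta}$ with a favorable exponent. Since the statement explicitly says this is "a particular version of Theorem 8.17 from Gilbarg and Trudinger," the honest proof is simply to cite that theorem and check the hypotheses translate (operator $= \Delta$, $n = 3$, $q = t > n$, $f = g \in L^{q/2}$, no lower-order coefficients), so in the paper I would state it as above and refer to \cite{Gilbarg-Trudinger} for the iteration details rather than rewriting the full De Giorgi--Moser scheme.
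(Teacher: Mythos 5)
Your proposal is correct and matches the paper's treatment: the paper offers no proof of Proposition \ref{R2}, simply presenting it as a particular case of Theorem 8.17 in Gilbarg--Trudinger (with $n=3$, operator $\Delta$, $q=t>n$), which is exactly the citation you arrive at. Your sketch of the underlying De Giorgi--Moser iteration is a faithful account of how that theorem is proved, so it goes beyond what the paper records but does not diverge from it.
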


In view of Propositions \ref{R1} and \ref{R2}, the positive solutions of $(\mathcal{SP}_{\varepsilon})$ are in
$C^2(\mathbb{R}^3)\cap L^{\infty}(\mathbb{R}^3)$ for all $\varepsilon >0$. Similar arguments was employed by He and Zou \cite{He-Zou2012}.

\subsection{Concentration of solutions}

\begin{lemma}\label{beta0} Suppose $(H_0)-(H_1)$ hold. Then, there exists $\beta_0>0$ such that
\begin{eqnarray*}c_\varepsilon\geq \beta_0,
\end{eqnarray*}
for every $\varepsilon>0$. Moreover,
\begin{eqnarray*}\limsup_{\varepsilon\rightarrow 0}c_\varepsilon\leq\inf_{\xi\in\mathbb{R}^3}C(\xi).
\end{eqnarray*}
\end{lemma}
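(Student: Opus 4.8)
The plan is to establish the two assertions separately, both by elementary manipulations of the minimax levels. For the lower bound $c_\varepsilon \geq \beta_0$, I would argue uniformly in $\varepsilon$ using the Mountain-Pass geometry of $I_\varepsilon$ together with hypothesis $(H_0)$. Indeed, since $V(\varepsilon x) \geq \alpha > 0$ for all $x$, the norm $\|\cdot\|_\varepsilon$ dominates a fixed multiple of the $H^1$-norm: $\|u\|_\varepsilon^2 \geq \min\{1,\alpha\}\|u\|^2$. Combining this with the Sobolev embedding $H^1(\mathbb{R}^3)\hookrightarrow L^q(\mathbb{R}^3)$ (with embedding constant independent of $\varepsilon$) and the nonnegativity of the Poisson term (Lemma~\ref{propriedade phi}(ii), which applies to $\phi_{\varepsilon,u}$ since $K$ satisfies $(H_0)$), one gets
\begin{eqnarray*}
I_\varepsilon(u) \geq \tfrac12\|u\|_\varepsilon^2 - \tfrac1q\|u\|_q^q \geq \tfrac12\min\{1,\alpha\}\|u\|^2 - C\|u\|^q,
\end{eqnarray*}
so there are $\rho, \alpha_0 > 0$, \emph{independent of} $\varepsilon$, with $I_\varepsilon(u)\geq \alpha_0$ whenever $\|u\| = \rho$. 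Since every path in $\Gamma$ joining $0$ to a point with negative energy must cross the sphere $\{\|u\|=\rho\}$, the minimax characterization gives $c_\varepsilon \geq \alpha_0 =: \beta_0$ for all $\varepsilon > 0$.

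For the upper bound, I would use the characterization $c_\varepsilon = \inf_{u\neq 0}\max_{t\geq 0} I_\varepsilon(tu)$ and a fixed test function. Fix an arbitrary $\xi \in \mathbb{R}^3$ and let $w \in H^1(\mathbb{R}^3)$ be the positive ground state of $(\mathcal{SP}_\xi)$ furnished by Proposition~\ref{prop1}, so $I_\xi(w) = C(\xi)$. Set $w_\varepsilon(x) = w(x - \xi/\varepsilon)$. By translation invariance of the gradient and $L^q$ terms, and a change of variables, one computes
\begin{eqnarray*}
I_\varepsilon(t w_\varepsilon) = \tfrac{t^2}{2}\!\int_{\mathbb{R}^3}\!\big(|\nabla w|^2 + V(\varepsilon x + \xi)w^2\big)\,dx + \tfrac{\varepsilon^2 t^4}{4}\!\int_{\mathbb{R}^3}\! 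K(\varepsilon x + \xi)\phi_{\varepsilon,w_\varepsilon}(x+\xi/\varepsilon)\, w^2\,dx - \tfrac{t^q}{q}\!\int_{\mathbb{R}^3}\! |w|^q\,dx,
\end{eqnarray*}
and I would check that the translated Poisson term stays bounded: using Lemma~\ref{propriedade phi}(iii) for $\phi_{\varepsilon,w_\varepsilon}$ one has $\varepsilon^2\int K(\varepsilon x+\xi)\phi_{\varepsilon,w_\varepsilon}w^2 \leq C\varepsilon^2\|w\|^4 \to 0$. Since $V$ is bounded with $V(\varepsilon x + \xi) \to V(\xi)$ pointwise, dominated convergence gives $\int V(\varepsilon x + \xi)w^2 \to V(\xi)\int w^2$. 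Hence $I_\varepsilon(t w_\varepsilon) \to I_\xi(tw)$ uniformly for $t$ in compact subsets of $[0,\infty)$; moreover the maximizing $t_\varepsilon$ in $\max_{t\geq 0} I_\varepsilon(tw_\varepsilon)$ stays bounded and bounded away from $0$ (exactly as in the $t_n \to 1$ argument in the proof of Theorem~\ref{critical value}), so $\max_{t\geq 0} I_\varepsilon(tw_\varepsilon) \to \max_{t\geq 0} I_\xi(tw) = I_\xi(w) = C(\xi)$. Therefore $\limsup_{\varepsilon\to 0} c_\varepsilon \leq \limsup_{\varepsilon\to 0}\max_{t\geq0} I_\varepsilon(tw_\varepsilon) = C(\xi)$, and taking the infimum over $\xi \in \mathbb{R}^3$ yields $\limsup_{\varepsilon\to 0} c_\varepsilon \leq \inf_{\xi\in\mathbb{R}^3} C(\xi)$.

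The main obstacle is controlling the nonlocal Poisson term after the translation $x \mapsto x - \xi/\varepsilon$: one must verify that the factor $\varepsilon^2$ genuinely kills this term uniformly, which relies on the uniform bound $\|\phi_{\varepsilon,u}\|_{\mathcal{D}^{1,2}} \leq C\|u\|^2$ from Lemma~\ref{propriedade phi}(iii) being available for the $\varepsilon$-dependent operator (valid since $K \geq \alpha > 0$ is bounded away from zero and, via $(H_1)$, bounded above on the relevant region). The rest is the now-standard fibering-map bookkeeping (the $t_\varepsilon \to 1$ type estimate) already carried out in Theorem~\ref{critical value}, which I would invoke rather than repeat.
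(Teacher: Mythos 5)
Your proposal is correct and follows essentially the same route as the paper: the upper bound is obtained exactly as in the paper's proof (translate the ground state $w$ of $(\mathcal{SP}_\xi)$ by $\xi/\varepsilon$, control the maximizing parameter $t_\varepsilon$ as in Theorem~\ref{critical value}, pass to the limit, and take the infimum over $\xi$). The only difference is cosmetic: for the lower bound the paper compares $I_\varepsilon$ with the fixed reference functional $J$ (constant potentials equal to $\alpha$) and uses $c_\varepsilon\geq\inf_{u\neq 0}\sup_{t\geq 0}J(tu)=\beta_0$, whereas you derive a uniform mountain-pass sphere estimate directly from $(H_0)$ and the Sobolev embedding; both rest on the same pointwise bounds $V,K\geq\alpha$ and yield the same $\varepsilon$-independent constant.
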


\begin{proof}
Let $w_\varepsilon\in H_\varepsilon$ be such that $c_\varepsilon=I_\varepsilon(w_\varepsilon)$. Then, from condition $(H_0)$
\begin{eqnarray*}
c_\varepsilon=I_\varepsilon(w_\varepsilon)\geq \inf_{\genfrac{}{}{0pt}{}{u\in H^1}{u\neq 0} } \sup_{t\geq 0}J(tu)=\beta_0,\,\,\,  \forall \varepsilon>0,
\end{eqnarray*}
where
\begin{eqnarray*}
J(u)=\frac{1}{2}\int_{\mathbb{R}^3}(|\nabla u|^2+\alpha u^2)\,dx+\frac{1}{4}\int_{\mathbb{R}^3}\alpha\phi_u
u^2\,dx-\frac{1}{q}\int_{\mathbb{R}^3}|u|^q\,dx.
\end{eqnarray*}

Let $\xi\in\mathbb{R}^3$ and consider $w\in H^1(\mathbb{R}^3)$ a least energy solution for system $(\mathcal{SP}_\xi)$, that is, $I_\xi(w)=C(\xi)$ and
$I'_{\xi}(w)=0$. Let $w_\varepsilon(x)=w(x-\frac{\xi}{\varepsilon})$ and take $t_\varepsilon>0$ such that
\begin{eqnarray*} c_\varepsilon\leq I_\varepsilon(t_\varepsilon w_\varepsilon)=\max_{t\geq 0}I_\varepsilon(t w_\varepsilon).
\end{eqnarray*}

Similar to the proof of Theorem \ref{critical value}, $t_\varepsilon\rightarrow 1$ as $\varepsilon\rightarrow 0$, then
\begin{eqnarray*} c_\varepsilon\leq I_\varepsilon(t_\varepsilon w_\varepsilon)\rightarrow I_\xi(w)=C(\xi), \quad \mbox{as}\,\, \varepsilon\rightarrow 0
\end{eqnarray*}
which implies that $\displaystyle\limsup_{\varepsilon\rightarrow 0}c_{\varepsilon}\leq C(\xi), \,\,\, \forall \,\, \xi\in\mathbb{R}^3$.

Therefore,
\begin{eqnarray*} \limsup_{\varepsilon\rightarrow 0}c_{\varepsilon}\leq \inf_{\xi\in\mathbb{R}^3} C(\xi).
\end{eqnarray*}

\end{proof}

\begin{lemma}\label{awayfromzero} There exist a family $(y_\varepsilon)\subset\mathbb{R}^3$ and constants $R,\beta>0$ such that
\begin{eqnarray*}\liminf_{\varepsilon\rightarrow 0}
\int_{B_R(y_\varepsilon)}u_\varepsilon^2\, dx \geq \beta, \quad \mbox{for each}\,\,\varepsilon>0.
\end{eqnarray*}
\end{lemma}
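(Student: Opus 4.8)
The plan is to argue by contradiction, combining the uniform lower bound $c_\varepsilon\ge\beta_0$ from Lemma~\ref{beta0} with the classical vanishing lemma of P.-L. Lions.

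First I would record a uniform $H^1$-bound on the solutions. Since $u_\varepsilon$ is a critical point of $I_\varepsilon$ with $I_\varepsilon(u_\varepsilon)=c_\varepsilon$, testing $I_\varepsilon'(u_\varepsilon)=0$ against $u_\varepsilon$ and using $\phi_{\varepsilon,u_\varepsilon}\ge 0$ (Lemma~\ref{propriedade phi}(ii)) together with $K\ge\alpha>0$ gives
$$\|u_\varepsilon\|_\varepsilon^2\le\|u_\varepsilon\|_\varepsilon^2+\varepsilon^2\int_{\mathbb{R}^3}K(\varepsilon x)\phi_{\varepsilon,u_\varepsilon}u_\varepsilon^2\,dx=\int_{\mathbb{R}^3}|u_\varepsilon|^q\,dx .$$
On the other hand the Nehari-type identity
$$c_\varepsilon=I_\varepsilon(u_\varepsilon)-\frac14 I_\varepsilon'(u_\varepsilon)u_\varepsilon=\frac14\|u_\varepsilon\|_\varepsilon^2+\Big(\frac14-\frac1q\Big)\int_{\mathbb{R}^3}|u_\varepsilon|^q\,dx ,$$
valid because $q>4$, shows that $\|u_\varepsilon\|_\varepsilon$ and $\|u_\varepsilon\|_q$ are controlled by $c_\varepsilon$. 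By Lemma~\ref{beta0}, $\limsup_{\varepsilon\to 0}c_\varepsilon\le\inf_{\xi\in\mathbb{R}^3}C(\xi)<\infty$, so $c_\varepsilon$ stays bounded for small $\varepsilon$; since $(H_0)$ makes $\|\cdot\|_\varepsilon$ uniformly comparable to $\|\cdot\|_{H^1(\mathbb{R}^3)}$, this produces a constant $M$ with $\|u_\varepsilon\|_{H^1(\mathbb{R}^3)}\le M$ for all small $\varepsilon$.

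Now suppose the conclusion of the lemma fails. Taking $R=1$, I claim that $\liminf_{\varepsilon\to 0}\sup_{y\in\mathbb{R}^3}\int_{B_1(y)}u_\varepsilon^2\,dx=0$: otherwise this $\liminf$ would equal some $2\beta>0$, and choosing for each small $\varepsilon$ a point $y_\varepsilon$ with $\int_{B_1(y_\varepsilon)}u_\varepsilon^2\,dx>\beta$ would yield a family contradicting the assumed failure of the lemma. Hence there is a sequence $\varepsilon_n\to 0$ with $\sup_{y\in\mathbb{R}^3}\int_{B_1(y)}u_{\varepsilon_n}^2\,dx\to 0$. Because $(u_{\varepsilon_n})$ is bounded in $H^1(\mathbb{R}^3)$, Lions' vanishing lemma gives $u_{\varepsilon_n}\to 0$ in $L^s(\mathbb{R}^3)$ for every $s\in(2,6)$, and in particular $\|u_{\varepsilon_n}\|_q\to 0$ since $q\in(4,6)$. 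The first displayed inequality then forces $\|u_{\varepsilon_n}\|_{\varepsilon_n}\to 0$, and the Nehari identity forces $c_{\varepsilon_n}\to 0$, contradicting $c_{\varepsilon_n}\ge\beta_0>0$. This contradiction yields $(y_\varepsilon)$, $R$ and $\beta$ as claimed.

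The argument is mostly routine; the two points that need care are (i) making the $H^1$-bound genuinely uniform in $\varepsilon$, which is exactly where $(H_0)$ and the upper estimate on $c_\varepsilon$ from Lemma~\ref{beta0} are used, and (ii) the passage from ``the lemma fails'' to an honest vanishing sequence $\varepsilon_n\to 0$ along which Lions' lemma can be applied. The Poisson term causes no trouble anywhere, since it is nonnegative and is simply discarded in every lower estimate.
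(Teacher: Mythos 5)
Your proof is correct and follows essentially the same route as the paper: negate the statement, extract a sequence $\varepsilon_n\to 0$ along which $\sup_{y}\int_{B_R(y)}u_{\varepsilon_n}^2\,dx\to 0$, apply Lions' vanishing lemma to kill $\|u_{\varepsilon_n}\|_q$, and derive $c_{\varepsilon_n}\to 0$ in contradiction with the lower bound $c_\varepsilon\ge\beta_0$ of Lemma~\ref{beta0}. You are in fact slightly more careful than the paper, which applies Lions' lemma without explicitly recording the uniform $H^1$ bound (your Nehari-type identity $c_\varepsilon=\tfrac14\|u_\varepsilon\|_\varepsilon^2+(\tfrac14-\tfrac1q)\|u_\varepsilon\|_q^q$ combined with $(H_0)$) and without spelling out how the failure of the lemma yields an honest vanishing sequence.
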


\begin{proof}
By contradiction, suppose that there exists a sequence $\varepsilon_n\rightarrow 0$ such that
\begin{eqnarray*}\lim_{n\rightarrow\infty} \sup_{y\in\mathbb{R}^3} \int_{B_R(y)}u_n^2\, dx = \beta, \quad \mbox{for all}\,\,R>0.
\end{eqnarray*}
where, for the sake of simplicity, we denote $u_n(x)=u_{\varepsilon_n}(x)$. Hereafter, denote $\phi_{\varepsilon_n,u_n}(x)=\phi_{u_n}(x)$.

From Lemma I.1 in \cite{Lions2}, we have
\begin{eqnarray*}\int_{\mathbb{R}^3}|u_n|^q\,dx\rightarrow 0, \quad \mbox{as}\,\, n\rightarrow\infty.
\end{eqnarray*}

But, since,
\begin{eqnarray*}
\int_{\mathbb{R}^3}(|\nabla u_n|^2+V(\varepsilon_n x)u_n^2)\,dx+ \int_{\mathbb{R}^3}\varepsilon_n^2 K(\varepsilon_n x)\phi_{u_n}
u_n^2\,dx=\int_{\mathbb{R}^3}|u_n|^q\,dx
\end{eqnarray*}
we have
\begin{eqnarray*}
\int_{\mathbb{R}^3}(|\nabla u_n|^2+V(\varepsilon_n x)u_n^2)\,dx\rightarrow 0,  \quad \mbox{as}\,\, n\rightarrow\infty.
\end{eqnarray*}

Therefore,
\begin{eqnarray*}
\lim_{n\rightarrow\infty}c_{\varepsilon_n}=\lim_{n\rightarrow\infty}I_{\varepsilon_n}(u_n)=0
\end{eqnarray*}
which is an absurd, since for some $\beta_0>0$, $c_\varepsilon\geq\beta_0$, from Lemma \ref{beta0}.
\end{proof}

\begin{lemma}\label{lema1} The family $(\varepsilon y_\varepsilon)$ is bounded. Moreover, if $y^*$ is the limit of the sequence
$(\varepsilon_n  y_{\varepsilon_n})$ in the family $(\varepsilon y_\varepsilon)$, then we have
\begin{eqnarray*}C(y^*)=\inf_{\xi\in\mathbb{R}^3}C(\xi).
\end{eqnarray*}
\end{lemma}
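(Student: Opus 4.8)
The plan is to proceed in two stages: first prove that $(\varepsilon y_\varepsilon)$ is bounded, then identify the limit point $y^*$ as a global minimum of $C$. Throughout I work with a sequence $\varepsilon_n\to 0$, writing $u_n=u_{\varepsilon_n}$, $y_n=y_{\varepsilon_n}$, and I set $w_n(x)=u_n(x+y_n)$ — the translate that recenters the mass provided by Lemma \ref{awayfromzero}. Since $\|u_n\|_{\varepsilon_n}$ is bounded (this follows from $c_{\varepsilon_n}=I_{\varepsilon_n}(u_n)$ being bounded via Lemma \ref{beta0} and the standard coercivity estimate, using $q>4$ so that both the Poisson term and the $L^q$ term are controlled), the sequence $(w_n)$ is bounded in $H^1(\mathbb{R}^3)$, so up to a subsequence $w_n\rightharpoonup w$ in $H^1$, with $w\neq 0$ because $\liminf_n\int_{B_R(0)}w_n^2\,dx=\liminf_n\int_{B_R(y_n)}u_n^2\,dx\geq\beta>0$ and the embedding $H^1(B_R)\hookrightarrow L^2(B_R)$ is compact.

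For the boundedness of $(\varepsilon_n y_n)$: suppose not, so $|\varepsilon_n y_n|\to\infty$ along a subsequence. Then by $(H_1)$, $\liminf_n V(\varepsilon_n y_n)\geq V_\infty$ and $\liminf_n K(\varepsilon_n y_n)\geq K_\infty$, and more generally, for any fixed $x$, $V(\varepsilon_n x+\varepsilon_n y_n)\to$ a value $\geq V_\infty$ (and similarly for $K$) — here one uses that $\varepsilon_n x\to 0$. Passing to the limit in the translated equation satisfied by $w_n$, namely $-\Delta w_n + V(\varepsilon_n x+\varepsilon_n y_n)w_n + \varepsilon_n^2 K(\varepsilon_n x+\varepsilon_n y_n)\phi_{\varepsilon_n,u_n}(x+y_n)w_n = |w_n|^{q-2}w_n$, and noting the nonlocal term carries the factor $\varepsilon_n^2\to 0$ (bounded by Lemma \ref{propriedade phi}(iii)), one finds that $w$ is a nonzero weak solution of an autonomous problem with potential constants $\geq V_\infty, K_\infty$. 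Using Fatou's lemma on the energy and the fact that $c_{\varepsilon_n}=I_{\varepsilon_n}(u_n)$ together with the Nehari/minimax characterization, one obtains $\liminf_n c_{\varepsilon_n}\geq C_\infty$. But Lemma \ref{beta0} gives $\limsup_n c_{\varepsilon_n}\leq\inf_\xi C(\xi)<C_\infty$ by hypothesis $(C^\infty)$, a contradiction. Hence $(\varepsilon_n y_n)$ is bounded, and after a further subsequence $\varepsilon_n y_n\to y^*\in\mathbb{R}^3$.

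It remains to show $C(y^*)=\inf_\xi C(\xi)$. Now $V(\varepsilon_n x+\varepsilon_n y_n)\to V(y^*)$ and $K(\varepsilon_n x+\varepsilon_n y_n)\to K(y^*)$ pointwise (in fact locally uniformly if one wants, but pointwise a.e.\ suffices with dominated convergence since $V,K$ are bounded by $(H_0)$ and $(H_1)$), so the limit $w$ is a nonzero weak solution of the autonomous system $(\mathcal{SP}_{y^*})$. By the lower bound on the minimax level and weak lower semicontinuity of the norm, together with Fatou applied to $|\nabla w_n|^2$, $V(\varepsilon_n(x+y_n))w_n^2$, and $|w_n|^q$ (and discarding the vanishing nonlocal term), one gets
\begin{eqnarray*}
C(y^*)\leq I_{y^*}(w)\leq\liminf_{n\to\infty}I_{\varepsilon_n}(u_n)=\liminf_{n\to\infty}c_{\varepsilon_n}\leq\inf_{\xi\in\mathbb{R}^3}C(\xi),
\end{eqnarray*}
where the middle inequality uses that $w$ lies on the Nehari manifold of $I_{y^*}$ so $I_{y^*}(w)=\max_{t\geq 0}I_{y^*}(tw)\geq C(y^*)$, and in fact $I_{y^*}(w)=C(y^*)$ is not needed — only $C(y^*)\leq\liminf c_{\varepsilon_n}$. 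Combined with the trivial inequality $C(y^*)\geq\inf_\xi C(\xi)$, this forces equality.

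The main obstacle is the passage to the limit in the translated equations, specifically justifying that the weak limit $w$ is nonzero and genuinely solves the autonomous limit problem: one must handle the spatial dependence $V(\varepsilon_n x+\varepsilon_n y_n)$ carefully (it converges to a constant only because $\varepsilon_n x\to 0$ for each fixed $x$, so the convergence is not uniform in $x$ and requires a dominated-convergence argument for the weak formulation tested against $C_0^\infty$ functions), and one must control the nonlocal term $\varepsilon_n^2 K(\varepsilon_n(x+y_n))\phi_{\varepsilon_n,u_n}(x+y_n)w_n$, showing it tends to zero in the appropriate weak sense using Lemma \ref{propriedade phi}. The energy comparison itself is then a routine application of Fatou's lemma and the minimax characterization of $C(\xi)$.
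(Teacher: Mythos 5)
Your overall strategy coincides with the paper's: argue by contradiction for the boundedness of $(\varepsilon y_\varepsilon)$ by showing that escape to infinity would force $C_\infty\leq\inf_\xi C(\xi)$, and then identify $C(y^*)$ by squeezing it between $\liminf_n c_{\varepsilon_n}$ and $\inf_\xi C(\xi)$; your treatment of $w\neq 0$ via Lemma \ref{awayfromzero} and local compactness is also the paper's. But two steps, as you state them, would fail. First, you cannot ``pass to the limit in the translated equation'' to conclude that $w$ solves an autonomous problem with constant potentials $\geq V_\infty,K_\infty$: the hypotheses give only $\liminf_{|x|\rightarrow\infty}V=V_\infty$ together with boundedness (no continuity, no actual limit at infinity), so $V(\varepsilon_n x+\varepsilon_n y_n)$ need not converge at all, and the constant-coefficient limit equation you invoke does not exist. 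The paper never passes to the limit in the equation in this case; it fixes auxiliary constants $\mu<V_\infty$, $\nu<K_\infty$ with $V>\mu$, $K>\nu$ outside a large ball, tests the equation with $\hat{u}$ to get only the \emph{inequality} (\ref{15}), deduces that the projection parameter $\sigma$ with $I_{\mu,\nu}(\sigma\hat{u})=\max_{t>0}I_{\mu,\nu}(t\hat{u})$ satisfies $\sigma\leq 1$, and from this obtains $c_{\mu,\nu}\leq\liminf_n c_{\varepsilon_n}\leq\inf_\xi C(\xi)$; the conclusion $C_\infty\leq\inf_\xi C(\xi)$ then comes from letting $\mu\rightarrow V_\infty$, $\nu\rightarrow K_\infty$ and invoking the continuity of $(\mu,\nu)\mapsto c_{\mu,\nu}$. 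These devices --- the parameters $\mu,\nu$, the bound $\sigma\leq 1$, and the continuity of $c_{\mu,\nu}$ --- carry the proof and are absent from your argument.

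Second, the chain $C(y^*)\leq I_{y^*}(w)\leq\liminf_n I_{\varepsilon_n}(u_n)$ is not justified as written. Testing yields only $\int(|\nabla w|^2+V(y^*)w^2)\,dx\leq\int|w|^q\,dx$, so $w$ need not lie on the Nehari manifold of $I_{y^*}$ (which carries the additional term $\int K(y^*)\phi_w w^2\,dx$), and $I_{y^*}(w)\geq C(y^*)$ does not follow; the paper again rescales to $\sigma w$ with $0<\sigma\leq 1$ and uses that $\sigma\mapsto\frac{\sigma^2}{4}\|w\|^2+\frac{\sigma^q(q-4)}{4q}\int|w|^q\,dx$ is nondecreasing on $(0,1]$. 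Moreover, Fatou cannot be applied to $I_{\varepsilon_n}(u_n)$ itself because of the negative term $-\frac{1}{q}\int|u_n|^q\,dx$ (Fatou applied to $|w_n|^q$ gives an inequality in the wrong direction there); one must first use $I'_{\varepsilon_n}(u_n)u_n=0$ to rewrite $c_{\varepsilon_n}=I_{\varepsilon_n}(u_n)-\frac{1}{4}I'_{\varepsilon_n}(u_n)u_n=\frac{1}{4}\|u_n\|_{\varepsilon_n}^2+\frac{q-4}{4q}\int|u_n|^q\,dx$ as a sum of nonnegative integrands and apply Fatou to that. With these repairs your outline becomes the paper's proof, but as it stands the key mechanisms are missing.
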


\begin{proof}
Consider $u_n(x)=u_{\varepsilon_n}(x+y_{\varepsilon_n})$. Suppose by contradiction that $(\varepsilon_n y_{\varepsilon_n})$ goes to infinity.

It follows from Lemma \ref{awayfromzero} that there exists constants $R,\beta>0$ such that
\begin{eqnarray}\label{11}\int_{B_R(0)}u_n^2(x)\, dx \geq \beta>0, \quad \mbox{for all}\,\,n\in\mathbb{N}.
\end{eqnarray}

Since $u_n(x)$ satisfies
\begin{eqnarray}\label{14}-\Delta u_n+ V(\varepsilon_n x+\varepsilon_n y_{\varepsilon_n})u_n+\varepsilon_n^2 K(\varepsilon_n x+\varepsilon_n
y_{\varepsilon_n})\phi_{\varepsilon_n,u_n} u_n= |u_n|^{q-2}u_n,
\end{eqnarray}
then, $u_n(x)$ is bounded in $H_{\varepsilon}$. Hence, passing to a subsequence if necessary, $u_n\rightarrow \hat{u}
\geq 0$ weakly in $H_{\varepsilon}$, strongly in $L_{loc}^p(\mathbb{R}^3)$ for $p\in(2,6)$ and a.e. in $\mathbb{R}^3$.
From (\ref{11}), $\hat{u}\neq 0$.

Using $\hat{u}$ as a test function in (\ref{14}) and taking the limit, we get
\begin{eqnarray}\label{15}
\int_{\mathbb{R}^3}(|\nabla \hat{u}|^2+\mu\hat{u}^2)\,dx \leq  \int_{\mathbb{R}^3}(|\nabla \hat{u}|^2+\mu\hat{u}^2)\,dx +
 \int_{\mathbb{R}^3}\nu \phi_{\hat{u}} \hat{u}^2\,dx \leq \int_{\mathbb{R}^3}|\hat{u}|^q\,dx
\end{eqnarray}
where, $\mu$ and $\nu$ are positive constantes such that
\begin{eqnarray*}\mu<\liminf_{|x|\rightarrow\infty}V(x) \quad \mbox{and}\quad \nu<\liminf_{|x|\rightarrow\infty}K(x).
\end{eqnarray*}

Consider the functional $I_{\mu,\nu}:H^1(\mathbb{R}^3)\rightarrow \mathbb{R}$ given by
\begin{eqnarray*}I_{\mu,\nu}(u)=\frac{1}{2}\int_{\mathbb{R}^3}(|\nabla u|^2+\mu u^2)\,dx+\frac{1}{4}\int_{\mathbb{R}^3}\nu\phi_u(x)
u^2\,dx-\frac{1}{q}\int_{\mathbb{R}^3}|u|^q\,dx.
\end{eqnarray*}

Let $\sigma>0$ be such that  $\displaystyle I_{\mu,\nu}(\sigma \hat{u})=\max_{t>0}I_{\mu,\nu}(t \hat{u})$.

We claim that
\begin{eqnarray}\label{12}
\sigma^2 \int_{\mathbb{R}^3}(|\nabla \hat{u}|^2+\mu\hat{u}^2)\,dx+\sigma^4 \int_{\mathbb{R}^3}\nu \phi_{\hat{u}} \hat{u}^2\,dx=
\sigma^q\int_{\mathbb{R}^3}|\hat{u}|^q\,dx.
\end{eqnarray}

In fact, from (\ref{15})
\begin{eqnarray*}
I_{\mu,\nu}(\sigma \hat{u})&=&
\frac{\sigma^2}{2} \int_{\mathbb{R}^3}(|\nabla \hat{u}|^2+\mu\hat{u}^2)\,dx+\frac{\sigma^4}{4} \int_{\mathbb{R}^3}\nu \phi_{\hat{u}} \hat{u}^2\,dx-
\frac{\sigma^q}{q}\int_{\mathbb{R}^3}|\hat{u}|^q\,dx\\
& \leq &
\frac{\sigma^2}{2} \int_{\mathbb{R}^3}|\hat{u}|^q\,dx+\frac{\sigma^4}{4} \int_{\mathbb{R}^3}\nu \phi_{\hat{u}} \hat{u}^2\,dx-
\frac{\sigma^q}{q}\int_{\mathbb{R}^3}|\hat{u}|^q\,dx
\end{eqnarray*}
it follows that $\sigma\leq 1$, and since $\frac{d}{dt}I_{\mu,\nu}(t\hat{u})\Big|_{t=\sigma}=0$, we obtain
\begin{eqnarray*}
\frac{d}{dt}I_{\mu,\nu}(t \hat{u})\Big|_{t=\sigma}=
\sigma \int_{\mathbb{R}^3}(|\nabla \hat{u}|^2+\mu\hat{u}^2)\,dx+\sigma^3 \int_{\mathbb{R}^3}\nu \phi_{\hat{u}} \hat{u}^2\,dx-
\sigma^{q-1}\int_{\mathbb{R}^3}|\hat{u}|^q\,dx=0
\end{eqnarray*}
proving (\ref{12}).

From Lemma \ref{beta0}, equation (\ref{12}) and the fact that $\sigma\leq 1$, we have
\begin{eqnarray*}
c_{\mu,\nu} &=& \inf_{u\neq 0}\max_{t>0}I_{\mu,\nu}(tu)=\inf_{u\neq 0}I_{\mu,\nu}(\sigma u)\leq I_{\mu,\nu}(\sigma \hat{u})\\
&=&
\frac{\sigma^2}{2} \int_{\mathbb{R}^3}(|\nabla \hat{u}|^2+\mu\hat{u}^2)\,dx+\frac{\sigma^4}{4} \int_{\mathbb{R}^3}\nu \phi_{\hat{u}} \hat{u}^2\,dx-
\frac{\sigma^q}{q}\int_{\mathbb{R}^3}|\hat{u}|^q\,dx\\
&= &
\frac{\sigma^2}{4} \int_{\mathbb{R}^3}(|\nabla \hat{u}|^2+\mu\hat{u}^2)\,dx+ \frac{\sigma^q(q-4)}{4q}\int_{\mathbb{R}^3}|\hat{u}|^q\,dx\\
&\leq &
\frac{1}{4} \int_{\mathbb{R}^3}(|\nabla \hat{u}|^2+\mu\hat{u}^2)\,dx+ \frac{q-4}{4q}\int_{\mathbb{R}^3}|\hat{u}|^q\,dx\\
&\leq &
\liminf_{n\rightarrow\infty}\Big(I_{\varepsilon_n}(u_n)-\frac{1}{4}I'_{\varepsilon_n}(u_n)u_n\Big)\\
&=&
\liminf_{n\rightarrow\infty}c_{\varepsilon_n}\leq \limsup_{n\rightarrow\infty}c_{\varepsilon_n}\leq \inf_{\xi\in\mathbb{R}^3}C(\xi)
\end{eqnarray*}
hence, $c_{\mu,\nu}\leq \inf_{\xi\in\mathbb{R}^3}C(\xi) $.

If we consider
\begin{eqnarray*}\mu\rightarrow \liminf_{|x|\rightarrow\infty}V(x)=V_{\infty} \quad \mbox{and}\quad
\nu\rightarrow \liminf_{|x|\rightarrow\infty}K(x)=K_{\infty}
\end{eqnarray*}
then, by the continuity of the function $(\mu,\nu)\mapsto c_{\mu\nu}$ we obtain $\displaystyle C_\infty\leq \inf_{\xi\in\mathbb{R}^3}C(\xi)$,
which contradicts condition
$(C^\infty)$. Therefore, $(\varepsilon y_\varepsilon)$ is bounded and there exists a subsequence of $(\varepsilon y_\varepsilon)$ such that $\varepsilon_n
 y_{\varepsilon_n}\rightarrow
 y^*$.

Now we proceed to prove that $\displaystyle C(y^*)=\inf_{\xi\in\mathbb{R}^3}C(\xi)$.

Recalling that
$u_n(x)=u_{\varepsilon_n}(x+y_{\varepsilon_n})$ and from the arguments above, $\hat{u}$ satisfies the equation
\begin{eqnarray} \label{13}-\Delta u+V(y^*)u+K(y^*)\phi_u u=|u|^{q-2}u
\end{eqnarray}

The Euler-Lagrange functional associated to this equations is  $I_{y^*}: H_{y^*}(\mathbb{R}^3)$, defined as in (\ref{Ixi}) with $\xi=y^*$.

Using $\hat{u}$ as a test function in (\ref{13}) and taking the limit, we obtain
\begin{eqnarray*}
\int_{\mathbb{R}^3}(|\nabla \hat{u}|^2+V(y^*)\hat{u}^2)\,dx \leq \int_{\mathbb{R}^3}|\hat{u}|^q\,dx.
\end{eqnarray*}
Then,
\begin{eqnarray*} I_{y^*}(\sigma \hat{u})=\max_{t>0}I_{y^*}(t\hat{u}).
\end{eqnarray*}

Finally, from Lemma \ref{beta0} and since $0<\sigma\leq 1$ we have
\begin{eqnarray*}
\inf_{\xi\in\mathbb{R}^3}C(\xi) &\leq & C(y^*) \leq I_{y^*}(\sigma \hat{u}) \\
& = &
\frac{\sigma^2}{4} \int_{\mathbb{R}^3}(|\nabla \hat{u}|^2+V(y^*)\hat{u}^2)\,dx+ \frac{\sigma^q(q-4)}{4q}\int_{\mathbb{R}^3}|\hat{u}|^q\,dx\\
&\leq &
\frac{1}{4} \int_{\mathbb{R}^3}(|\nabla \hat{u}|^2+V(y^*)\hat{u}^2)\,dx+ \frac{q-4}{4q}\int_{\mathbb{R}^3}|\hat{u}|^q\,dx\\
&\leq &
\liminf_{n\rightarrow\infty} \Big[ \frac{1}{4} \int_{\mathbb{R}^3}\Big(|\nabla u_n|^2+V(\varepsilon_n x+\varepsilon_n y_{\varepsilon_n})u_n^2\Big)\,dx+
 \frac{q-4}{4q}\int_{\mathbb{R}^3}|u_n|^q\,dx\Big]\\
&\leq &
\liminf_{n\rightarrow\infty}\Big(I_{\varepsilon_n}(u_n)-\frac{1}{4}I'_{\varepsilon_n}(u_n)u_n\Big)\\
&=&
\liminf_{n\rightarrow\infty}c_{\varepsilon_n}\leq \inf_{\xi\in\mathbb{R}^3}C(\xi)
\end{eqnarray*}
which implies that $\displaystyle C(y^*)=\inf_{\xi\in\mathbb{R}^3}C(\xi)$.
\end{proof}

As a consequence of the previous lemma, there exists a subsequence of $(\varepsilon_n y_{\varepsilon_n})$ such that $\varepsilon_n y_{\varepsilon_n}
\rightarrow y^*$.

Let $u_{\varepsilon_n}(x+y_{\varepsilon_n})=u_n(x)$ and consider $\tilde{u}\in H^1$ such that $u_n\rightharpoonup \tilde{u}$.

\begin{lemma} $u_n\rightarrow \tilde{u}$ in $H^{1}(\mathbb{R}^3)$, as $n\rightarrow\infty$. Moreover, there exists $\varepsilon^*>0$ such that
$\lim_{|x|\rightarrow\infty}u_\varepsilon(x)=0$ uniformly on $\varepsilon\in (0,\varepsilon^*)$.
\end{lemma}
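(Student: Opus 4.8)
The plan is to handle the two assertions separately, both resting on what was gathered in Lemmas \ref{beta0} and \ref{lema1}: after passing to the subsequence $\varepsilon_n$ one has $c_{\varepsilon_n}\to\inf_{\xi}C(\xi)=C(y^*)$, and the weak limit $\tilde u=\hat u$ is in fact a \emph{least energy} solution of the autonomous problem at $y^*$ (the scaling argument in Lemma \ref{lema1} forces the constant $\sigma$ there to be $1$), so that $I_{y^*}(\tilde u)=C(y^*)$ and $I'_{y^*}(\tilde u)=0$. Throughout, $u_n(x)=u_{\varepsilon_n}(x+y_{\varepsilon_n})$ denotes the translated solution, and the decay statement will be read for this translated family (this is what is used afterwards; an honest reading is required here since the unshifted $u_\varepsilon$ has its bump near $y_\varepsilon\to\infty$).

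For the strong convergence I would exploit the quartic/quadratic structure. Since $u_n$ solves the translated $(\mathcal S_{\varepsilon_n})$, the combination $I_{\varepsilon_n}(u_n)-\tfrac14 I'_{\varepsilon_n}(u_n)u_n$ kills the nonlocal term and gives
$$c_{\varepsilon_n}=\frac14\int_{\mathbb R^3}\bigl(|\nabla u_n|^2+V(\varepsilon_n x+\varepsilon_n y_{\varepsilon_n})u_n^2\bigr)dx+\frac{q-4}{4q}\int_{\mathbb R^3}|u_n|^q\,dx,$$
and likewise $C(y^*)=\tfrac14\int(|\nabla\tilde u|^2+V(y^*)\tilde u^2)+\tfrac{q-4}{4q}\int|\tilde u|^q$. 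Passing to a subsequence along which the three integrals converge, weak lower semicontinuity of the Dirichlet integral and Fatou's lemma applied to the nonnegative integrands $V(\varepsilon_n x+\varepsilon_n y_{\varepsilon_n})u_n^2$ (tending a.e.\ to $V(y^*)\tilde u^2$, since $u_n\to\tilde u$ a.e.\ and $\varepsilon_n x+\varepsilon_n y_{\varepsilon_n}\to y^*$) and $|u_n|^q$ show that each limit dominates its $\tilde u$-counterpart; as the two weighted sums are equal and all weights are positive, equality holds term by term. Then $\int|\nabla u_n|^2\to\int|\nabla\tilde u|^2$, which with $\nabla u_n\rightharpoonup\nabla\tilde u$ in $L^2$ yields $\nabla u_n\to\nabla\tilde u$ in $L^2$; and $\int V(\varepsilon_n x+\varepsilon_n y_{\varepsilon_n})u_n^2\to\int V(y^*)\tilde u^2$, which after expanding $\int V(\varepsilon_n\cdot+\varepsilon_n y_{\varepsilon_n})|u_n-\tilde u|^2$, using $u_n-\tilde u\rightharpoonup0$ in $L^2$, dominated convergence for the shifted potential and $V\ge\alpha$, gives $u_n\to\tilde u$ in $L^2$. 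Hence $u_n\to\tilde u$ in $H^1(\mathbb R^3)$, and since every subsequence has a further subsequence with this property, the whole sequence converges.

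For the uniform decay I would proceed as follows. Lemma \ref{beta0} bounds $c_\varepsilon$, so the energy identity above bounds $\|u_\varepsilon\|_{H^1}$ uniformly in $\varepsilon\in(0,\varepsilon^*)$. Writing the equation as $-\Delta u_\varepsilon+b_\varepsilon(x)u_\varepsilon=u_\varepsilon^{q-1}$ with $b_\varepsilon(x)=V(\varepsilon x)+\varepsilon^2K(\varepsilon x)\phi_{\varepsilon,u_\varepsilon}(x)\in L^\infty_{loc}$, $b_\varepsilon\ge\alpha$, and $0\le u_\varepsilon^{q-1}\le u_\varepsilon^{q-1}+u_\varepsilon$, Proposition \ref{R1} gives $\|u_\varepsilon\|_{L^t}\le C(t)\|u_\varepsilon\|_{H^1}$ for every $t\ge2$ with $C(t)$ independent of $\varepsilon$ (the Brezis--Kato constant does not see $b_\varepsilon$); then $\|\phi_{\varepsilon,u_\varepsilon}\|_\infty\le C\|K\|_\infty(\|u_\varepsilon\|_{L^6}^2+\|u_\varepsilon\|_{L^2}^2)$ and hence $\|b_\varepsilon\|_\infty$ are bounded uniformly in $\varepsilon$, and feeding $\Delta u_\varepsilon=b_\varepsilon u_\varepsilon-u_\varepsilon^{q-1}$ into Proposition \ref{R2} (with any fixed $t>3$) over unit balls produces a uniform bound $\|u_\varepsilon\|_\infty\le M_0$. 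Now, for the translated sequence $u_n$, strong $H^1$ convergence gives uniformly small tails $\|u_n\|_{H^1(B_R^c)}\le\|u_n-\tilde u\|_{H^1}+\|\tilde u\|_{H^1(B_R^c)}=:\rho(R)\to0$; interpolating with the $L^\infty$ bound, $\|u_n\|_{L^s(B_2(y))}\le M_0^{1-2/s}\rho(R)^{2/s}$ for $|y|\ge R+2$, and the same smallness propagates to $b_nu_n-u_n^{q-1}$ in $L^{t/2}(B_2(y))$, so Proposition \ref{R2} on $B_1(y)$ gives
$$\sup_{B_1(y)}u_n\le C\bigl(\|u_n\|_{L^2(B_2(y))}+\|b_nu_n-u_n^{q-1}\|_{L^{t/2}(B_2(y))}\bigr)\le\omega(R),\qquad |y|\ge R+2,$$
with $\omega(R)\to0$ as $R\to\infty$, uniformly in $n$. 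Since, by the strong convergence just proved, every sequence $\varepsilon_k\to0$ admits a subsequence along which the translated solutions converge in $H^1$ to a least energy solution, this estimate holds along any such sequence; combined with the (automatic) decay of $u_\varepsilon$ for each fixed $\varepsilon$, this gives $\lim_{|x|\to\infty}u_\varepsilon(x)=0$ uniformly in $\varepsilon\in(0,\varepsilon^*)$, shrinking $\varepsilon^*$ if needed.

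The main obstacle is the uniform decay: one must obtain the $L^\infty$ bound and the tail estimates with constants genuinely independent of $\varepsilon$ — which is exactly where the boundedness of $V$ and $K$, the $\varepsilon^2$ prefactor of the Poisson term, and the resulting uniform $L^\infty$-bound on $\phi_{\varepsilon,u_\varepsilon}$ enter — and then upgrade the $L^2$-tail smallness coming from the strong $H^1$ convergence to $L^\infty$-tail smallness via the De Giorgi estimate of Proposition \ref{R2}, keeping everything uniform as $\varepsilon\to0$.
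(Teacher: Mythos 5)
Your proposal is correct and follows essentially the same route as the paper: the identity $I_{\varepsilon_n}(u_n)-\tfrac14 I'_{\varepsilon_n}(u_n)u_n$ to eliminate the nonlocal term, Fatou/weak lower semicontinuity squeezed between $C(y^*)$ on both sides to upgrade weak to strong $H^1$ convergence, and then Propositions \ref{R1} and \ref{R2} to convert the uniformly small $H^1$ (hence $L^{2^*}$) tails into uniform $L^\infty$ decay. Your added care — term-by-term identification of the limits, the uniform bound on $\phi_{\varepsilon,u_\varepsilon}$, and the remark that the decay must be read for the translated family $u_n(\cdot)=u_{\varepsilon_n}(\cdot+y_{\varepsilon_n})$ — only makes explicit steps the paper leaves implicit.
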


\begin{proof}
By applying Lemmas \ref{beta0} and \ref{lema1}, we observe that
\begin{eqnarray*}
\inf_{\xi\in\mathbb{R}^3}C(\xi) &=& C(y^*)\leq I_{y^*}(\tilde{u})-\frac{1}{4}I'_{y^*}(\tilde{u})\tilde{u}\\
&=&\frac{1}{4}\int_{\mathbb{R}^3}(|\nabla \tilde{u}|^2+V(y^*)\tilde{u}^2)\,dx+
\Big(\frac{q-4}{4q}\Big)\int_{\mathbb{R}^3}|\tilde{u}|^q\,dx\\
&\leq & \liminf_{n\rightarrow\infty}\frac{1}{4}\int_{\mathbb{R}^3}(|\nabla u_n|^2+V(\varepsilon_n x+\varepsilon_n y_{\varepsilon_n})u_n^2)\,dx+
\Big(\frac{q-4}{4q}\Big)\int_{\mathbb{R}^3}|u_n|^q\,dx\\
&\leq & \limsup_{n\rightarrow\infty}\frac{1}{4}\int_{\mathbb{R}^3}(|\nabla u_n|^2+V(\varepsilon_n x+\varepsilon_n
y_{\varepsilon_n})u_n^2)\,dx+\Big(\frac{q-4}{4q}\Big)\int_{\mathbb{R}^3}|u_n|^q\,dx\\
&=& \limsup_{n\rightarrow\infty}\Big( I_{\varepsilon_n}(u_{\varepsilon_n})-\frac{1}{4}I'_{\varepsilon_n}(u_{\varepsilon_n})u_{\varepsilon_n}\Big) \\
& =& \limsup_{n\rightarrow\infty} c_{\varepsilon_n} \leq  \inf_{\xi\in\mathbb{R}^3}C(\xi)
\end{eqnarray*}
then,
\begin{eqnarray*}
\lim_{n\rightarrow\infty}\int_{\mathbb{R}^3}(|\nabla u_n|^2+V(\varepsilon_n x+\varepsilon_n y_{\varepsilon_n})u_n^2)\,dx=
\int_{\mathbb{R}^3}(|\nabla \tilde{u}|^2+V(y^*)\tilde{u}^2)\,dx.
\end{eqnarray*}

Now observe that
\begin{eqnarray*}
c_{\varepsilon_n} &=& I_{\varepsilon_n}(u_{\varepsilon_n})-\frac{1}{4}I'_{\varepsilon_n}(u_{\varepsilon_n})u_{\varepsilon_n}\\
&=& \frac{1}{4}\int_{\mathbb{R}^3}(|\nabla u_{\varepsilon_n}|^2+V(\varepsilon_n x)u_{\varepsilon_n}^2)\,dx+
\Big(\frac{q-4}{4q}\Big)\int_{\mathbb{R}^3}|u_{\varepsilon_n}|^q\,dx\\
&=& \frac{1}{4}\int_{\mathbb{R}^3}(|\nabla u_{n}|^2+V(\varepsilon_n x+\varepsilon_n y_{\varepsilon_n})u_{n}^2)\,dx+
\Big(\frac{q-4}{4q}\Big)\int_{\mathbb{R}^3}|u_{n}|^q\,dx \\
& := &  \alpha_n
\end{eqnarray*}
hence,
\begin{eqnarray*}\limsup_{n\rightarrow\infty}\alpha_n= \limsup_{n\rightarrow\infty}c_{\varepsilon_n}\leq C(y^*).
\end{eqnarray*}

On the other hand, using Fatou's Lemma,
\begin{eqnarray*}\liminf_{n\rightarrow\infty}\alpha_n &\geq &
\frac{1}{4}\int_{\mathbb{R}^3}(|\nabla \tilde{u}|^2+V(y^*)\tilde{u}^2)\,dx+ \Big(\frac{q-4}{4q}\Big)\int_{\mathbb{R}^3}|\tilde{u}|^q\,dx\\
&= & I_{y^*}(\tilde{u})-\frac{1}{4}I'_{y^*}(\tilde{u})\tilde{u}\\
&\geq & C(y^*)
\end{eqnarray*}
then, $\displaystyle \lim_{n\rightarrow\infty}\alpha_n=C(y^*)$.

Therefore, since $\tilde{u}$ is the weak limit of $(u_n)$ in $H^1(\mathbb{R}^3)$, we conclude that $u_n\rightarrow \tilde{u}$ strongly in
$H^1(\mathbb{R}^3)$.

In particular, we have
\begin{eqnarray}\label{*}
\lim_{R\rightarrow\infty}\int_{|x|\geq R}u_n^{2^*}\,dx=0 \quad \mbox{uniformly on}\,\, n.
\end{eqnarray}

Applying Proposition \ref{R1} with $b(x)=V(\varepsilon_n x+\varepsilon_n y_{\varepsilon_n})+\varepsilon_n^2 K(\varepsilon_n x+\varepsilon_n
 y_{\varepsilon_n})\phi_{u_n}$, we obtain $u_n\in L^t(\mathbb{R}^3)$, $t\geq 2$ and
\begin{eqnarray*}\|u_n\|_t\leq C\|u_n\|
\end{eqnarray*}
where $C$ does not depend on $n$.

Now consider
\begin{eqnarray*}
-\Delta u_n &\leq & -\Delta u_n+V(\varepsilon_n x+\varepsilon_n y_{\varepsilon_n})u_n+\varepsilon_n^2 K(\varepsilon_n x+\varepsilon_n y_{\varepsilon_n})
\phi_{u_n}u_n\\
&=&|u_n|^{q-2}u_n := g_n(x).
\end{eqnarray*}

For some $t>3$, $\|g_n\|_{\frac{t}{2}}\leq C$, for all $n$. Using Proposition \ref{R2}, we have
\begin{eqnarray*}\sup_{B_{R}(y)}u_n \leq C \Big(\|u_n\|_{L^2(B_{2R}(y))}+\|g_n\|_{L^{\frac{t}{2}}(B_{2R}(y)) }\Big)
\end{eqnarray*}
for every $y\in\mathbb{R}^3$, which implies that $\|u_n\|_{L^\infty(\mathbb{R}^3)}$ is uniformly bounded. Then, from (\ref{*}),
\begin{eqnarray*}\lim_{|x|\rightarrow\infty}u_n(x)=0 \quad \mbox{uniformly on}\,\, n\in\mathbb{N}.
\end{eqnarray*}

Consequently, there exists $\varepsilon^*>0$ such that
\begin{eqnarray*}\lim_{|x|\rightarrow\infty}u_\varepsilon(x)=0 \quad \mbox{uniformly on}\,\, \varepsilon\in(0, \varepsilon^*).
\end{eqnarray*}

\end{proof}

To finish the proof of Theorem \ref{principal}, it remains to show that the solutions of $(\mathcal{SP}_\varepsilon)$ have at most one local (hence global)
maximum point $y^*$ such that $C(y^*)=\min_{\xi\in\mathbb{R}^3} C(\xi)$.

From the previous Lemma, we can focus our attention only in a fixed ball $B_R(0)\subset \mathbb{R}^3$. If $w\in L^{\infty}(\mathbb{R}^3)$ is the limit in
 $C^2_{loc}(\mathbb{R}^3)$ of
\begin{eqnarray*}w_n(x)=u_n(x+y_n)
\end{eqnarray*}
then, from Gidas, Ni and Nirenberg \cite{Gidas-Ni-Nirenberg},  $w$ is radially symmetric and has a unique local maximum at zero which is a non-degenerate global maximum.
Therefore, there exists $n_0\in\mathbb{N}$ such that $w_n$ does not have two critical points in $B_R(0)$ for all $n\geq n_0$. Consider
$p_\varepsilon\in\mathbb{R}^3$ this local (hence global) maximum of $w_\varepsilon$.

Recall that if $u_\varepsilon$ is a solution of $(S_\varepsilon)$, then
\begin{eqnarray*}v_\varepsilon(x)=u_\varepsilon(\frac{x}{\varepsilon})
\end{eqnarray*}
is a solution of $(\mathcal{SP}_\varepsilon)$.

Since $p_\varepsilon$ is the unique maximum of $w_\varepsilon$ then, $\hat{y_\varepsilon}=p_\varepsilon+y_\varepsilon$ is the unique maximum of
$u_\varepsilon$. Hence, $\tilde{y}_\varepsilon=\varepsilon p_\varepsilon+\varepsilon y_\varepsilon$ is the unique maximum of $v_\varepsilon$.

Once $p_\varepsilon\in B_R(0)$, that is, it is bounded, and $\varepsilon y_\varepsilon\rightarrow y^*$, we have
\begin{eqnarray*}\tilde{y}_\varepsilon\rightarrow y^*.
\end{eqnarray*}
where $C(y^*)=\inf_{\xi\in\mathbb{R}^3}C(\xi)$. Consequently, the concentration of functions $v_\varepsilon$ approache to $y^*$.

\paragraph{\textbf{Acknowledgements}} This paper was carried out while the author was visiting the Mathematics Department of USP in São Carlos. The author would like to thank the members of ICMC-USP for their hospitality, specially Professor S. H. M. Soares for enlightening discussions and helpful comments.
\bibliography{<your-bib-database>}

\begin{thebibliography}{00}

\bibitem{Alves-Soares} C. O. Alves, S. H. M. Soares,
\textit{Existence and concentration of positive solutions for a class of gradient systems},
Nonlinear Differ. Equ. Appl., \textbf{12}, 437-457, 2005.

\bibitem{Alves-Souto-Soares} C.O. Alves, S.H.M. Soares, M.A.S. Souto,
\textit{Schr\"{o}dinguer-Poisson equations without Ambrosetti-Rabinowitz condition},
J. Math. Anal. Appl., \textbf{377} (2011), 584-592.

\bibitem{Ambrosetti} A. Ambrosetti,
\textit{On Schrödinger-Poisson systems},
Milan J. Math., \textbf{76} (2008), 257-274.

\bibitem{Azzollini-Pomponio-SM} A. Azzollini, A. Pomponio,
\textit{Ground state solutions for the nonlinear Schrödinger-Maxwell equations},
J. Math. Anal. Appl., \textbf{345} (2008), 90-108.

\bibitem{Benci-Fortunato-1998} V. Benci, D. Fortunato,
\textit{An eigenvalue problem for the Schrödinger-Maxwell equations},
Top. Meth. Nonlinear Anal, \textbf{11}, (1998) 283-293.

\bibitem{Brezis-Kato} H. Brezis, T. Kato ,
\textit{Remarks on the Schrödinger operator with singular complex potentials},
J. Math. pures et appl., \textbf{58}, (1979) 137-151.

\bibitem{Cerami-Vaira} G. Cerami, G. Vaira,
\textit{Positive solutions for some non-autonomous Schrödinger-Poisson systems},
J. Differential Equations, \textbf{248} (2010), 521-543.

\bibitem{Coclite} G. M. Coclite,
\textit{A multiplicity result for the nonlinear Schrödinger-Maxwell equations},
Commun. Appl. Anal., \textbf{7} (2003), 417-423.

\bibitem{D'Aprile-Mugnai} T. D'Aprile, D. Mugnai,
\textit{Solitary waves for nonlinear Klein-Gordon-Maxwell and Schrodinger-Maxwell equations},
Proc. R. Soc. Edinb., Sect. A \textbf{134} (2004), 1-14.

\bibitem{Fang-Zhang} Y. Fang, J. Zhang,
\textit{Multiplicity of solutions for the nonlinear Schrödinger-Maxwell system},
Commun. Pure App. Anal., \textbf{10} (2011), 1267-1279.

\bibitem{Gidas-Ni-Nirenberg} B. Gidas, Wei-Ming Ni, L. Nirenberg,
\textit{Symmetry and related Properties via the Maximum Principle},
Commun. Math. Phys., \textbf{68} (1979), 209-243.

\bibitem{Gilbarg-Trudinger} D. Gilbarg, N. S. Trudinger,
\textit{Elliptic Partial Differential Equations of Second Order, Second edition},
Grundlehrem der Mathematischen Wissenschaften [Fundamental Principles of Mathematical Sciences], (224). Springer-Verlag, Berlin, 1983.

\bibitem{He-Zou2012} X. He, W. Zou,
\textit{Existence and concentration of ground states for Schrödinger-Poisson equations with critical growth},
J. Math. Phys., \textbf{53} (2012), 023702.

\bibitem{Ianni-Vaira} I. Ianni, G. Vaira,
\textit{On concentration of positive bound states for the Schrödinger-Poisson problem with potentials},
Adv. Nonlinear Studies, \textbf{8}, 573-595, 2008.

\bibitem{Kikuchi} H. Kikuchi,
\textit{On the existence of a solution for a elliptic system related to the Maxwell-Schrödinger equations},
Nonlinear Anal., \textbf{67} (2007), 1445-1456.

\bibitem{Lions2} P. Lions,
\textit{The concentration-compactness principle in the calculus of variations. The locally compact case. II},
Ann. Inst. H. Poincaré Anal. Non Linéaire, \textbf{1} (1984), 223-283.

\bibitem{Mercuri} C. Mercuri,
\textit{Positive solutions of nonlinear Schrödinger-Poisson systems with radial potentials vanishing at infinity},
Atti Accad. Naz. Lincei Cl. Sci. Fis. Mat. Natur. Rend. Lincei Mat. Appl., \textbf{19} (2008), 211-227.

\bibitem{Rabinowitz} P.H. Rabinowitz,
\textit{On a class of nonlinear Schrödinger equations},
Z. Angew. Math. Phys., \textbf{43}, 270-291, 1992.

\bibitem{Ruiz} D. Ruiz,
\textit{The Schrödinger-Poisson equation under the effect of a nonlinear local term},
J. Funct. Anal., \textbf{237} (2006), 665-674.

\bibitem{Wang} X. Wang,
\textit{On concentration of positive solutions bounded states of nonlinear Schrödinger equations},
Comm. Math. Phys., \textbf{153}, 229-244, 1993.

\bibitem{Wang-Zeng} X. Wang, B. Zeng,
\textit{On concentration of positive bound states of nonlinear Schrödinger equations with competing potential functions},
SIAM J. Math. Anal., \textbf{28}, 633-655, 1997.

\bibitem{Willem} M. Willem,
"Minimax theorems",
Progress in Nonlinear Differential Equations and their Applications, 24, Birkh\"auser, Boston, 1996.

\bibitem{Yang-Han} M-H Yang, Z-Q Han,
\textit{Existence and multiplicity results for the nonlinear Schrödinger-Poisson systems},
Nonlinear Anal., \textbf{13} (2012), 1093-1101.

\bibitem{Zhao-Liu-Zhao} L. Zhao, H. Liu, F. Zhao,
\textit{Existence and concentration of solutions for the Schrödinger-Poisson equations with steep potential},
J. Differential Equations, \textbf{255} (2013), 1-23.

\end{thebibliography}

\end{document}